\def\NAT@def@citea{\def\@citea{\NAT@separator}}
\theoremstyle{plain}
\newtheorem{theorem}{Theorem}[section]
\newtheorem{lemma}[theorem]{Lemma}
\theoremstyle{definition}
\newtheorem{ex}[theorem]{Example}
\theoremstyle{remark}
\def\Bar{\overline}
\def\B{I\!\!B}
\def\gph{\mbox{\rm gph}\,}
\begin{document}


\title{Stability of Nonhomogeneous Split Equality and Split Feasibility Problems with Possibly Nonconvex Constraint Sets}

\author{
\name{Vu Thi Huong\textsuperscript{a,b}\thanks{Emails: huong.vu@zib.de; vthuong@math.ac.vn}, Hong-Kun Xu\textsuperscript{c,d}\thanks{Email: xuhk@hdu.edu.cn. Corresponding author}, and Nguyen Dong Yen\textsuperscript{b}\thanks{Email: ndyen@math.ac.vn}}
\affil{
\textsuperscript{a}{Digital Data and Information for Society, Science, and Culture\\ Zuse Institute Berlin, 14195 Berlin, Germany}\\
 \textsuperscript{b}{Institute of Mathematics, Vietnam Academy of Science and Technology, 10072 Hanoi, Vietnam}\\
 \textsuperscript{c}{School of  Science, Hangzhou Dianzi University, 310018 Hangzhou, China}\\
 \textsuperscript{d}{College of Mathematics and Information Science, Henan Normal University, 453007 Xinxiang,  China}
 }}

\maketitle

\begin{abstract}
By applying some techniques of set-valued and variational analysis, we study solution stability of nonhomogeneous split equality problems and nonhomogeneous split feasibility problems, where the constraint sets need not be convex. Necessary and sufficient conditions for the Lipschitz-likeness of the solution maps of the problems are given and illustrated by concrete examples. The obtained results complement those given in~[Huong VT, Xu HK, Yen ND. {Stability analysis of split equality and split feasibility problems}. arXiv:2410.16856.], where classical split equality problems and split feasibility problems have been considered.  
\end{abstract}

\begin{keywords}
Nonhomogeneous split equality problem, nonhomogeneous split feasibility problem, Lipschitz-likeness, limiting normal cone, limiting coderivative, parametric generalized equation, adjoint operator.

\medskip
 \noindent {\bf 2020 Mathematics Subject Classification:}  49J53, 49K40, 65K10, 90C25, 90C31.
\end{keywords}

\section{Introduction} \setcounter{equation}{0}

To model phase retrieval and other image restoration problems in signal processing, Censor and Elfving~\cite[Section~6]{Censor_Elfving_94} introduced  the \textit{split feasibility problem} (SFP). Since the problem is also useful in signal processing/imaging reconstruction, medical treatment of intensity-modulated radiation therapy, gene regulatory network inference, etc., it has been studied by many authors; see, e.g.,~\cite{HuongXuYen22} for some related references. Apart from (SFP), another feasibility problem called the
\textit{split equality problem} (SEP), which was proposed by  Moudafi and his coauthors~\cite{Moudafi13,Moudafi14,Moudafi_Shemas13,Byrne_Moudafi_17}, can be used in decomposition methods for PDEs, game theory, and intensity-modulated radiation therapy. Relationships between (SEP) and (SFP) can be seen in~\cite[Section~3]{Xu_Celgieski21}.

The study of (SEP) and (SFP) is mainly focused on solution algorithms for (SFP). The interested reader is referred to~\cite{HuongXuYen22} for several references and comments. 

In~\cite{HuongXuYen22}, we have shown that solution stability of the above feasibility problems can be successfully investigated by set-valued and variational analysis techniques. The main idea is to 
transform the problems into suitable parametric generalized equations and use the machinery of generalized differentiation from~\cite{B-M06}, as well as a fundamental result of Mordukhovich~\cite{Mor04}. The results obtained in~\cite{HuongXuYen22} include necessary and sufficient conditions for the Lipschitz-like property of the solution map in question. A discussion on the importance of the latter property in and its relations to other stability property of set-valued maps can be found in~\cite{HuongXuYen22}. 

Introducing a canonical perturbation, usually termed the right-hand-side perturbation, to the constraint system of (SEP) (resp., of (SFP)), we get a \textit{nonhomogeneous split equality problem} (resp.,~\textit{nonhomogeneous split feasibility problem}). The first model has been considered by Reich and Tuyen~\cite{Reich_Tuyen_21}. It is worth emphasizing that the consideration of nonhomogeneous systems is a traditional issue in mathematics. For instance, in parallel to homogeneous systems of linear equations  (resp., homogeneous systems of linear differential equations), one also considers nonhomogeneous systems of linear equations  (resp., nonhomogeneous systems of linear differential equations).

Our aim in this paper is to characterize the Lipschitz-like property of the solution map of the nonhomogeneous split equality problem and of the nonhomogeneous split feasibility problem where \textit{the constraint sets are not required to be convex}. The aim will be achieved by developing some proof schemes used in~\cite{HuongXuYen22}. Due to the appearance of the canonical perturbation (the right-hand-side perturbation), the obtained results cannot be derived from those given in~\cite{HuongXuYen22}. Conversely, the characterizations of the Lipschitz-like property of the solution maps in the paper do not imply the ones of~\cite{HuongXuYen22}. 

The paper organization is as follows. Section~\ref{sect_Preliminaries} presents the notions and auxiliary results needed in the sequel. Nonhomogeneous split equality problems is studied in Section~\ref{sect_NSEP}. Section~\ref{sect_NSFP} is devoted to nonhomogeneous split feasibility problems. Two illustrative examples for the obtained solution stability results are presented in Section~\ref{sect_Examples}. The last section gives several concluding remarks.      

Throughout the paper, if $M\in\mathbb R^{p\times q}$ is a matrix, then $M^{\rm T}$ stands for the transpose of $M$. The kernel is defined by ${\rm ker}\, M=\big\{ x\in\mathbb R^p\,:\, Mx=0\big\}$. The inverse of a set $\Omega\subset\mathbb R^q$ via the operator $M:\mathbb R^p\to\mathbb R^q$  is defined by $M^{-1}(\Omega)=\big\{ x\in\mathbb R^p\,:\, Mx\in\Omega\big\}$. The space of the linear operators from $\mathbb R^p$ to $\mathbb R^q$ is denoted by $L\big(\mathbb R^p,\mathbb R^q\big)$. The symbol ${\rm int}\,\Omega$ signifies the topological interior of $\Omega$. The open ball (resp., the closed ball) with center $\bar y\in\mathbb R^q$ and radius $\rho>0$ are denoted by ${\B}(\bar y,\rho)$ (resp., $\Bar {\B}(\bar y,\rho)$). The set of nonnegative real numbers is abbreviated to~$\mathbb R_+$.

\section{Preliminaries}\label{sect_Preliminaries} \setcounter{equation}{0}

Some basic concepts and tools from set-valued and variational analysis~\cite{B-M06,Mordukhovich_2018}, which will be needed the sequel, are recalled in this section.

\medskip
Let  $F:\mathbb R^n\rightrightarrows  \mathbb R^m$ be a set-valued map. The \textit{graph} of $F$ is the set $$\gph F:=\big\{(x,y)\in\mathbb R^n\times\mathbb R^m\,:\, y\in F(x)\big\}.$$ 
We say that $F$ has \textit{closed graph} if $\gph F$ is closed in the product space $\mathbb R^n\times\mathbb R^m$, where the norm is given by $\|(x,y)\|:=\|x\|+\|y\|$ for all $(x,y)\in\mathbb R^n\times\mathbb R^m$. For any $(\bar x,\bar y)\in \gph F$, one says that $F$ is {\it locally closed} around $(\bar x,\bar y)$ if there exits $\rho>0$ such that $\big(\gph F\big)\cap \Bar {\B}((\bar x, \bar y),\rho)$ is closed in $\mathbb R^n\times\mathbb R^m$.  If $F$ has closed graph, then it is locally closed around any point in its graph. 

\medskip
Let $\Omega$ be a nonempty subset of $\mathbb{R}^n$ and $\bar x \in \Omega$. The {\it regular normal cone} (or \textit{Fr\'echet normal cone}) to $\Omega$ at $\bar x$ is defined by
\begin{align*}
	\widehat N(\bar x; \Omega)=\Big\{ x'\in \mathbb{R}^n\; :\; \limsup\limits_{x \xrightarrow{\Omega}\bar x} \dfrac{\langle x', x-\bar x \rangle}{\|x-\bar x\|} \leq 0 \Big\},
\end{align*}
where $x \xrightarrow{\Omega} \bar x$ means that $x \rightarrow \bar x$ and $ x\in \Omega$. The {\it limiting normal cone} (or {\it Mordukhovich normal cone}) to $\Omega$ at $\bar x$ is given by
\begin{eqnarray*}\begin{array}{rl}N(\bar x;\Omega)&:=\big\{x'\in \mathbb{R}^n\, :\,  \exists \mbox{ sequences } x_k\to \bar x,\ x_k'\rightarrow x'\\
		& \qquad \qquad \qquad \quad \ \mbox {with } x_k'\in \widehat
		N(x_k;\Omega)\, \mbox{ for all }\, k=1,2,\dots
		\big\}.\end{array}\end{eqnarray*} 
We put $\widehat N(\bar x; \Omega)=N(\bar x; \Omega) =\emptyset$ if $\bar x \not\in \Omega$. One has $\widehat N(\bar x; \Omega) \subset N(\bar x; \Omega)$ for all $\Omega \subset \mathbb{R}^n$ and $\bar x \in \Omega$. If the reverse inclusion holds, one says ~\cite[Def.~1.4]{B-M06} that~$\Omega$ is \textit{normally regular} at~$\bar x$. It is well known~\cite[Prop.~1.5]{B-M06} that if  $\Omega$ is convex, then both regular normal cone and limiting normal cone to $\Omega$ at $\bar x$  reduce to the \textit{normal cone in the sense of convex analysis}, that is,
\begin{align*}
	\widehat{N}( \bar x; \Omega)= {N}( \bar x; \Omega)=\big\{x'\in \mathbb{R}^n \,:\, \langle x', x-\bar x \rangle \le 0\ \,\forall x\in\Omega\big\}.
\end{align*}
Thus, a convex set is normally regular at any point belonging to it. There exist many nonconvex sets which are normally regular. 

\begin{ex}\label{ex_a} {\rm The set $\Omega_1:=\big\{x=(x_1, x_2)\in \mathbb{R}^2\, :\, x_1\leq x_2^2 \big\}$ is normally regular at any point $\bar x=(\bar x_1, \bar x_2)\in\Omega_1$. Indeed, setting $f(x)=x_1-x_2^2$ for all $x=(x_1, x_2)\in\mathbb{R}^2$ and $\Theta=-\mathbb R_+$, one has $\Omega_1=f^{-1}(\Theta):=\{x\in\mathbb R^2\,:\, f(x)\in\Theta\}$. Applying~\cite[Theorem~1.19]{B-M06}, we can assert that $\Omega_1$ is normally regular at~$\bar x$. Indeed, setting $f(x)=x_1-x_2^2$ for all $x=(x_1, x_2)\in\mathbb{R}^2$ and $\Theta=-\mathbb R_+$, one has $\Omega_1=f^{-1}(\Theta):=\{x\in\mathbb R^2\,:\, f(x)\in\Theta\}$. Since $f$ is continuously differentiable and $\nabla f(\bar x)\neq 0$, the normal regularity of $\Omega_1$  at~$\bar x$ is assured by~\cite[Theorem~1.19]{B-M06}.}
\end{ex}

\begin{ex}\label{ex_b} {\rm The set 
		$\Omega_2:=\big\{x=(x_1, x_2)\in \mathbb{R}^2\, :\, \gamma _1\leq x_1^2+x_2^2 \leq \gamma_2\big\}$ with $0<\gamma_1<\gamma_2$ is normally regular at any point $\bar x=(\bar x_1, \bar x_2)\in\Omega_2$. Indeed, with $g(x):=x_1^2+x_2^2$ for all $x=(x_1, x_2)\in\mathbb{R}^2$ and $\Theta:=[\gamma_1,\gamma_2]$, one has $\Omega_1=g^{-1}(\Theta):=\{x\in\mathbb R^2\,:\, g(x)\in\Theta\}$. As $g$ is continuously differentiable and $\nabla g(\bar x)\neq 0$, applying~\cite[Theorem~1.19]{B-M06} shows that $\Omega_2$ is normally regular at~$\bar x$. Note that $\Omega_2$ is nonconvex, compact, and path connected.}
\end{ex}

Let $(\bar x,\bar y)$ belong to the graph of a set-valued map $F:\mathbb R^n\rightrightarrows  \mathbb R^m$. The set-valued map $D^*F(\bar x,\bar y):\mathbb R^m\rightrightarrows \mathbb R^n$ with
\begin{eqnarray*}
	D^*F(\bar x,\bar y)(y'):=\big\{x'\in \mathbb R^n\, :\, (x',-y')\in N((\bar x,\bar y);\mbox{gph}\, F)\big\}
\end{eqnarray*}
is called the {\it limiting coderivative} (or {\it Mordukhovich coderivative}) of the set-valued map $F$ at $(\bar x,\bar y)$.  When $F$ is single-valued and $\bar y=F(\bar x)$, we write $D^*F(\bar x)$ for $D^*F(\bar x,\bar y)$. If $F:\mathbb R^n\to \mathbb R^m$ is \textit{strictly differentiable} \cite[p.~19]{B-M06} at $\bar x$ (in particular, if $F$ is continuously Fr\'echet differentiable in a neighborhood of $\bar x$) with the derivative $\nabla F(\bar x)$ then
\begin{eqnarray*}
	D^*F(\bar x)(y')=\big\{\nabla F(\bar x)^*y'\big\}, \quad \forall	y'\in\mathbb R^m.
\end{eqnarray*} 
Here, the {\it adjoint operator} $\nabla F(\bar x)^*$ of $\nabla F(\bar x)$ is defined by setting $$\langle\nabla F(\bar x)^*y',x\rangle=\langle y',\nabla F(\bar x)x\rangle$$ for every $x\in\mathbb R^n$ (see \cite[Theorem 1.38]{B-M06} for more details). In accordance with the definition given at~\cite[p.~351]{Mor04}, we say that $F$ is \textit{graphically regular} at $(\bar x,\bar y) \in  \mbox{gph}\, F$ if $\mbox{gph}\,F$ is normally regular at $(\bar x,\bar y).$

\medskip
One says that $F:\mathbb R^n\rightrightarrows  \mathbb R^m$ is \textit{Lipschitz-like} (\textit{pseudo-Lipschitz}, or has the \textit{Aubin property}; see \cite{Aubin84}) at $(\bar x, \bar y) \in \gph F$ if there exist neighborhoods $U$ of $\bar x$, $V$ of $\bar y$, and constant $\ell > 0$ such that
\begin{align*}
	F(x') \cap V \subset F (x) + \ell \| x' - x\| \Bar {\B}_{\mathbb{R}^m}, \quad \forall x',\, x \in U.
\end{align*} 

Let $X, Y, Z$ be finite-dimensional spaces.  Consider a \textit{parametric generalized equation}
\begin{equation}\label{para_general_eq}
	0\in f(x, y)+G(x, y)
\end{equation}
with the decision variable $y$ and the parameter $x$, where $f:X\times Y\rightarrow Z$ is a single-valued map while $G:X\times Y\rightrightarrows Z$ is a set-valued map. The \textit{solution map} to \eqref{para_general_eq} is the set-valued map $S: X \rightrightarrows Y$ given by
\begin{equation}\label{solution_map}
	S(x):=\left\{y\in Y \;:\; 0\in f(x, y)+G(x, y)\right\}, \quad x\in X.
\end{equation}

\medskip
The next theorem, which is a special case of~\cite[Theorem 4.2(ii)]{Mor04} states a necessary and sufficient condition for Lipschitz-like property of the solution map \eqref{solution_map}.
\begin{theorem}{\rm (See~\cite[Theorem 4.2(ii)]{Mor04})}\label{Thm. 4.2-Mor-2004-JOGO}
	Let $X, Y, Z$ be finite-dimensional spaces and let $(\bar x, \bar y)$ satisfy \eqref{para_general_eq}. Suppose that $f$ is strictly differentiable at $(\bar x, \bar y)$, $G$ is locally closed around  $(\bar x, \bar y,\bar z)$ with $\bar z:=-f(\bar x, \bar y)$ and, moreover, $G$ is graphically regular at $(\bar x, \bar y,\bar z)$. If
	\begin{equation}\label{Lipschitz-like condition}
		\big [(x', 0)\in \nabla f(\bar x, \bar y)^*(z')+D^*G(\bar x, \bar y, \bar z)(z') \big]\Longrightarrow [x'=0,\, z'=0],
	\end{equation} then $S$ is Lipschitz-like at $(\bar x, \bar y)$. Conversely, if $S$ is Lipschitz-like at $(\bar x, \bar y)$ and if the regularity condition
	\begin{equation}\label{regularity condition}
		\big [(0,0)\in \nabla f(\bar x, \bar y)^*(z')+D^*G(\bar x, \bar y, \bar z)(z') \big]\Longrightarrow z'=0\end{equation}
	is satisfied, then one must have~\eqref{Lipschitz-like condition}.
	
\end{theorem}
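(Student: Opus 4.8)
The statement is the specialization of Mordukhovich's general coderivative criterion \cite[Theorem~4.2(ii)]{Mor04} to a strictly differentiable single-valued part $f$, and the route I would take makes this reduction explicit by combining the pre-image calculus for limiting normal cones with the coderivative criterion for the Lipschitz-like property. The first step is to rewrite the graph of $S$ as a pre-image. Define $h:X\times Y\to X\times Y\times Z$ by $h(x,y):=(x,y,-f(x,y))$; since $-f(\ox,\oy)=\oz$, we have $(x,y)\in\gph S$ if and only if $-f(x,y)\in G(x,y)$, that is $h(x,y)\in\gph G$, whence $\gph S=h^{-1}(\gph G)$. As $f$ is strictly differentiable at $(\ox,\oy)$, so is $h$, and a direct computation gives
\[
\nabla h(\ox,\oy)^*(x',y',z')=(x',y')-\nabla f(\ox,\oy)^*(z'),\qquad (x',y',z')\in X\times Y\times Z.
\]
Continuity of $h$ together with the local closedness of $\gph G$ around $(\ox,\oy,\oz)$ also shows that $\gph S$ is locally closed around $(\ox,\oy)$.

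Next I would apply the pre-image calculus rule for limiting normal cones under a strictly differentiable mapping (\cite{B-M06}; this is the rule already invoked in Examples~\ref{ex_a}--\ref{ex_b}). Its qualification condition is $N((\ox,\oy,\oz);\gph G)\cap\ker\nabla h(\ox,\oy)^*=\{0\}$; unwinding the definition of the coderivative (so that $(u',v',w')\in N((\ox,\oy,\oz);\gph G)$ is equivalent to $(u',v')\in D^*G(\ox,\oy,\oz)(-w')$) and substituting the formula for $\nabla h^*$, one verifies that this qualification condition coincides exactly with the regularity condition~\eqref{regularity condition}. Since $G$ is graphically regular at $(\ox,\oy,\oz)$, the set $\gph G$ is normally regular there, so the rule holds with equality,
\[
N\big((\ox,\oy);\gph S\big)=\nabla h(\ox,\oy)^*\,N\big((\ox,\oy,\oz);\gph G\big),
\]
and translating the right-hand side back through $D^*G$ and $\nabla h^*$ gives the key identity
\[
(x',y')\in N\big((\ox,\oy);\gph S\big)\ \Longleftrightarrow\ \exists\,z'\in Z:\ (x',y')\in\nabla f(\ox,\oy)^*(z')+D^*G(\ox,\oy,\oz)(z').
\]

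Finally I would invoke the Mordukhovich coderivative criterion \cite{Mordukhovich_2018}: since $\gph S$ is locally closed around $(\ox,\oy)$, the map $S$ is Lipschitz-like at $(\ox,\oy)$ if and only if $D^*S(\ox,\oy)(0)=\{0\}$, i.e. $(x',0)\in N((\ox,\oy);\gph S)\Rightarrow x'=0$. For sufficiency, note that \eqref{Lipschitz-like condition} contains \eqref{regularity condition} as its special case $x'=0$, so the qualification condition holds and the key identity applies; then \eqref{Lipschitz-like condition} forces $x'=0$ whenever $(x',0)\in N((\ox,\oy);\gph S)$, which is precisely $D^*S(\ox,\oy)(0)=\{0\}$. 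For necessity, the Lipschitz-like property yields $D^*S(\ox,\oy)(0)=\{0\}$, and combining this with the key identity (available because \eqref{regularity condition} is now assumed) gives $x'=0$ for every $z'$ satisfying $(x',0)\in\nabla f(\ox,\oy)^*(z')+D^*G(\ox,\oy,\oz)(z')$; applying \eqref{regularity condition} with $x'=0$ then upgrades the conclusion to $z'=0$, which is \eqref{Lipschitz-like condition}.

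The main obstacle is the passage to equality in the pre-image normal-cone formula. For the sufficiency part one needs only the upper estimate $N((\ox,\oy);\gph S)\subset\nabla h(\ox,\oy)^*N((\ox,\oy,\oz);\gph G)$, which holds under the qualification condition alone; but the necessity part requires the reverse inclusion, and this is exactly where the graphical regularity of $G$ enters, since normal regularity of $\gph G$ at $(\ox,\oy,\oz)$ is what upgrades the estimate to an equality. The remaining points---the sign bookkeeping relating $N(\cdot;\gph G)$ to $D^*G$, the computation of $\nabla h^*$, and the local closedness of $\gph S$---are routine.
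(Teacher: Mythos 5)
Your proof is correct, but there is nothing in the paper to compare it against: the paper does not prove this theorem at all, it imports the statement as a special case of \cite[Theorem 4.2(ii)]{Mor04} and uses it as a black box in Sections~\ref{sect_NSEP} and~\ref{sect_NSFP}. What you have written is essentially a self-contained reconstruction of the argument behind the cited result: write $\gph S=h^{-1}(\gph G)$ with $h(x,y)=(x,y,-f(x,y))$, compute $\nabla h(\ox,\oy)^*(x',y',z')=(x',y')-\nabla f(\ox,\oy)^*(z')$, check that the normal qualification condition $N((\ox,\oy,\oz);\gph G)\cap\ker\nabla h(\ox,\oy)^*=\{0\}$ coincides with \eqref{regularity condition} (the point being that once $z'=0$, the witness $(u',v')=-\nabla f(\ox,\oy)^*(z')$ vanishes automatically, so the extra conclusion in the QC costs nothing), use normal regularity of $\gph G$ to upgrade the upper estimate for $N((\ox,\oy);\gph S)$ to an equality, and finish with the coderivative (Mordukhovich) criterion applied to $S$, whose graph you correctly showed to be locally closed. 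The division of labor is also right: sufficiency needs only the upper inclusion, valid under the QC, which follows from \eqref{Lipschitz-like condition} by specializing to $x'=0$; necessity needs the lower inclusion, and that is exactly where graphical regularity of $G$ enters. One citation should be repaired, though: the pre-image rule you need is \emph{not} ``the rule already invoked in Examples~\ref{ex_a}--\ref{ex_b}''. Those examples rely on \cite[Theorem~1.19]{B-M06}, whose qualification hypothesis is surjectivity of the strict derivative, and your $\nabla h(\ox,\oy)$ is never surjective: its range is the graph of $-\nabla f(\ox,\oy)$, a proper subspace of $X\times Y\times Z$ whenever $Z\neq\{0\}$. What you actually need is the inverse-image rule under the normal qualification condition $N\cap\ker\nabla h^{*}=\{0\}$, with equality and regularity of the pre-image when the target set is normally regular; such a rule is available in \cite{B-M06,Mordukhovich_2018} (in finite dimensions no additional compactness hypotheses are required), and its assumptions are precisely what you verified, so this is a misattribution rather than a gap.
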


\section{Nonhomogeneous Split Equality Problems}\label{sect_NSEP} \setcounter{equation}{0}

In this section, we propose the notion of nonhomogeneous split equality problem with possibly nonconvex constraint sets and study the solution map under the perturbations of the two matrices and the vector in the data tube.

\medskip
Let ${\mathcal C}\subset \mathbb{R}^n$, ${\mathcal Q}\subset \mathbb{R}^m$ be nonempty, closed sets and let $A\in \mathbb{R}^{l\times n}$, $B \in  \mathbb{R}^{l \times m}$ and $c \in \mathbb R^l$ be given matrices and vector. The \textit{nonhomogeneous split equality problem} (NSEP) defined by the data tube $(A, B, c, \mathcal C, \mathcal Q)$ aims at seeking all the pairs $(x, y) \in \mathcal C \times \mathcal Q$ such that $Ax - By = c$. 

\medskip
Especially, if the sets $\mathcal C, \mathcal Q$ are convex and $c = 0$, then (NSEP) becomes the widely-known \textit{split equality problem} (SEP) considered; see~\cite{HuongXuYen22} for a large list of related references. 

\medskip
Assuming that both sets $\mathcal C$ and $\mathcal Q$ are convex, Reich and Tuyen~\cite{Reich_Tuyen_21} considered several iterative methods for solving (NSEP).

\medskip
We are interested in the solution stability of (NSEP) w.r.t. the changes of the parameters $A, B, c$. The solution map $\Phi : \mathbb{R}^{l\times n} \times \mathbb{R}^{l \times m} \times \mathbb R^l \rightrightarrows \mathbb{R}^{n} \times \mathbb{R}^{m}$ of (NSEP) is defined by 
\begin{equation}\label{phi}
	\Phi (A, B,c):= \{(x, y)  \in \mathcal C \times \mathcal Q \, : \, Ax-By=c\}, \quad (A, B,c) \in \mathbb{R}^{l\times n} \times \mathbb{R}^{l \times m} \times \mathbb R^l.
\end{equation}

Our results on sufficient and necessary conditions for the Lipschitz-like property of $\Phi $ can be stated as follows.

\begin{theorem}\label{Lipschitz-like_phi_thm}
	Let $(\bar A, \bar B, \bar c) \in \mathbb{R}^{l\times n} \times \mathbb{R}^{l\times m} \times \mathbb R^l$ and $(\bar x, \bar y) \in \Phi (\bar A,\bar B, \bar c)$. Suppose that $\mathcal C$ is normally regular at $\bar x$ and $\mathcal Q$ is normally regular at $\bar y$. If the regularity condition
	\begin{equation}\label{Lipschitz-like_S_con}
		\big(\bar A^{\rm T}\big)^{-1}\big(-N(\bar x; \mathcal C)\big) \cap \big(\bar B^{\rm T}\big)^{-1}\big(N(\bar y; \mathcal Q)\big)  = \{0\}.
	\end{equation} is satisfied, then the solution map $\Phi$ of {\rm (NSEP)} is Lipschitz-like at $\big((\bar A, \bar B, \bar c),(\bar x, \bar y)\big)$. Conversely, if $\Phi$ is Lipschitz-like at $\big((\bar A, \bar B, \bar c),(\bar x, \bar y)\big)$, then the regularity condition~\eqref{Lipschitz-like_S_con} holds, provided that $(\bar x, \bar y) \neq (0, 0)$.
\end{theorem}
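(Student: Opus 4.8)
The plan is to rewrite the feasibility map $\Phi$ in \eqref{phi} as the solution map \eqref{solution_map} of a parametric generalized equation of the form \eqref{para_general_eq} and then to apply the characterization in Theorem~\ref{Thm. 4.2-Mor-2004-JOGO}. The parameter $x$ of \eqref{para_general_eq} is taken to be the data tube $(A,B,c)$ and the decision variable $y$ to be the pair $(x,y)\in\mathbb R^n\times\mathbb R^m$. Setting $Z:=\mathbb R^l\times\mathbb R^n\times\mathbb R^m$, I would define the single-valued map $f\big((A,B,c),(x,y)\big):=\big(Ax-By-c,-x,-y\big)$ together with the constant set-valued map $G\big((A,B,c),(x,y)\big):=\{0\}\times\mathcal C\times\mathcal Q$. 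Then $0\in f+G$ is equivalent to $Ax-By=c$, $x\in\mathcal C$, $y\in\mathcal Q$, so the solution map of this generalized equation is exactly $\Phi$, and at the reference point $\bar z:=-f(\bar A,\bar B,\bar c,\bar x,\bar y)=(0,\bar x,\bar y)$.

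Before invoking Theorem~\ref{Thm. 4.2-Mor-2004-JOGO}, I would check its hypotheses. Since $f$ is affine it is strictly differentiable everywhere; since $\mathcal C,\mathcal Q$ are closed, $\{0\}\times\mathcal C\times\mathcal Q$ is closed and hence $G$ is locally closed around $\big((\bar A,\bar B,\bar c),(\bar x,\bar y),\bar z\big)$. The graphical regularity of $G$ is where the standing assumptions enter: because $\gph G$ is the Cartesian product of the whole parameter--decision space with $\{0\}\times\mathcal C\times\mathcal Q$, the product rule for the limiting normal cone reduces its normal regularity at $\bar z$ to the (automatic) normal regularity of $\{0\}$ in $\mathbb R^l$ and to the normal regularity of $\mathcal C$ at $\bar x$ and of $\mathcal Q$ at $\bar y$, both of which are assumed.

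The technical core is the computation of the two terms in \eqref{Lipschitz-like condition}. Writing $z'=(z_1',z_2',z_3')$ and taking the adjoint of the derivative of $f$, the perturbations of the matrices $A$ and $B$ produce rank-one blocks, giving
\[
\nabla f(\bar A,\bar B,\bar c,\bar x,\bar y)^*z'=\big(z_1'\bar x^{\rm T},-z_1'\bar y^{\rm T},-z_1',\bar A^{\rm T}z_1'-z_2',-\bar B^{\rm T}z_1'-z_3'\big),
\]
whereas, $G$ being constant, $D^*G$ at the reference point returns $\{0\}$ in all components when $-z'\in N(\bar z;\{0\}\times\mathcal C\times\mathcal Q)=\mathbb R^l\times N(\bar x;\mathcal C)\times N(\bar y;\mathcal Q)$ and is empty otherwise. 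Forcing the two decision-variable blocks of $\nabla f^*z'$ to vanish yields $z_2'=\bar A^{\rm T}z_1'$ and $z_3'=-\bar B^{\rm T}z_1'$, while the inclusion $-z'\in N(\bar z;\cdot)$ becomes $\bar A^{\rm T}z_1'\in-N(\bar x;\mathcal C)$ and $\bar B^{\rm T}z_1'\in N(\bar y;\mathcal Q)$. Hence \eqref{Lipschitz-like condition} is satisfied precisely when the only $z_1'$ with these two membership relations is $z_1'=0$, i.e. precisely when \eqref{Lipschitz-like_S_con} holds; this establishes the sufficiency part.

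For the converse I would use the backward implication of Theorem~\ref{Thm. 4.2-Mor-2004-JOGO}, which in addition requires the regularity condition \eqref{regularity condition}. Setting \emph{all} blocks of $\nabla f^*z'$ (parameters included) to zero, the $c$-block gives $-z_1'=0$, hence $z_1'=0$ and then $z_2'=z_3'=0$, so \eqref{regularity condition} holds; when $(\bar x,\bar y)\neq(0,0)$ the same conclusion also follows from the rank-one blocks $z_1'\bar x^{\rm T}$, $z_1'\bar y^{\rm T}$, which is the form in which the hypothesis is used. Consequently, Lipschitz-likeness of $\Phi$ implies \eqref{Lipschitz-like condition}, and thus \eqref{Lipschitz-like_S_con}. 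I expect the main obstacle to be purely computational, namely taking the adjoint with respect to the matrix variables $A,B$ correctly (the origin of the $z_1'\bar x^{\rm T}$ and $z_1'\bar y^{\rm T}$ blocks) and applying the product rule for the limiting normal cone of $\{0\}\times\mathcal C\times\mathcal Q$; the reduction to \eqref{Lipschitz-like_S_con} is then routine.
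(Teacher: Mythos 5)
Your proposal is correct and, at its core, it is the same proof as the paper's: the identical reformulation of (NSEP) as a parametric generalized equation (your $f$ and $G$ are the paper's $f_1$ in \eqref{f1} and $G_1$ in \eqref{G1}, up to a permutation of the range components), the same verification of the hypotheses of Theorem~\ref{Thm. 4.2-Mor-2004-JOGO} via the product structure of $\gph G$, and the same reduction of \eqref{Lipschitz-like condition} to \eqref{Lipschitz-like_S_con}. Your explicit block formula for the adjoint, with the rank-one blocks $z_1'\bar x^{\rm T}$, $-z_1'\bar y^{\rm T}$ and the block $-z_1'$, packages into one computation what the paper does by hand: these blocks are precisely the matrices $A'$, $B'$ and the vector $c'=-z'$ that the paper constructs entrywise in its necessity argument in order to realize an arbitrary $z'$ from the intersection in \eqref{Lipschitz-like_S_con} as part of an admissible pair for \eqref{Lipschitz-like condition}. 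That construction is what makes your ``precisely when'' equivalence between \eqref{Lipschitz-like condition} and \eqref{Lipschitz-like_S_con} legitimate in both directions, and it is implicitly present in your adjoint formula.

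Where you genuinely deviate --- and in fact improve on the paper --- is the verification of the regularity condition \eqref{regularity condition}. The paper deduces it from injectivity of $\nabla f_1(\bar w,\bar u)^*$, obtained in Lemma~\ref{lemma_f1} from surjectivity of $\nabla f_1(\bar w,\bar u)$, which is proved there only under $(\bar x,\bar y)\neq(0,0)$; this is the sole point where the proviso enters the paper's necessity argument. You instead read $z_1'=0$ directly off the $c$-block of the adjoint, after which the decision blocks force $z_2'=z_3'=0$; no hypothesis on $(\bar x,\bar y)$ is used. So your argument establishes the necessity part without the proviso $(\bar x,\bar y)\neq(0,0)$: the canonical perturbation $c$ alone makes $\nabla f$ surjective at every point (in the paper's notation, one could take $A=0$, $B=0$, $c=-\widetilde v^3$ in the proof of Lemma~\ref{lemma_f1}). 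One slip you should fix: $f$ is \emph{not} affine in the joint variable $\big((A,B,c),(x,y)\big)$, since the terms $Ax$ and $By$ are bilinear, so ``affine, hence strictly differentiable'' is wrong as stated. The conclusion survives because $f$ is a polynomial map, hence $C^\infty$, hence strictly differentiable everywhere --- which is exactly what the limit computation in Lemma~\ref{lemma_f1} verifies.
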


To prove Theorem~\ref{Lipschitz-like_phi_thm}, we will transform (NSEP) to a generalized equation of a special type and apply Theorem~\ref{Thm. 4.2-Mor-2004-JOGO}. This approach is similar to the one for obtaining Theorem~3.1 in~\cite{HuongXuYen22}, where the Lipschitz-likeness of (homogeneous) split equality problems was investigated. 

\medskip
Let $W: = \mathbb{R}^{l\times n} \times \mathbb{R}^{l\times m} \times \mathbb R^l$, $U:= \mathbb{R}^n \times \mathbb{R}^m$, and $V:= \mathbb{R}^n \times \mathbb{R}^m \times \mathbb{R}^l$. Consider the function $f_1: W\times U \to V$ given by   
\begin{equation}\label{f1}
	f_1(w, u):= (-x, -y, Ax - By-c), \quad w=(A, B, c) \in  W,\ u=(x, y) \in U
\end{equation}
and the set-valued map $G_1: W\times U \rightrightarrows V$ with 
\begin{equation}\label{G1}
	G_1(w, u):= \mathcal C \times \mathcal Q \times \{0_{\mathbb{R}^l}\}, \quad w=(A, B, c) \in  W, \ u=(x, y) \in U.
\end{equation}
Then, the solution map $(A, B, c) \mapsto \Phi(A, B, c)$ of (NESP) can be interpreted as the solution map $w\mapsto \Phi(w)$ of the parametric generalized equation 
$0 \in f_1(w, u) + G_1(w, u)$ where, in agreement with~\eqref{phi},  
\begin{equation}\label{phi_solution map}
	\Phi(w) = \left\{u \in U \; : \; 0 \in f_1(w, u) + G_1(w, u)\right\}, \quad w \in W.
\end{equation}

First, let us compute the derivative of $f_1$. 

\begin{lemma} \label{lemma_f1}
	The function $f_1: W\times U \to V$ in~\eqref{f1} is strictly differentiable at any point $(\bar w, \bar u) \in W \times U$ with $\bar w = (\bar A, \bar B, \bar c)$, $\bar u =(\bar x, \bar y)$. The derivative $\nabla f_1(\bar w, \bar u) : W \times U \to V$  of $f_1$ at $(\bar w, \bar u)$ is given by the formula
	\begin{equation}\label{deri_f1}
		\nabla f_1(\bar w, \bar u)(w, u) = (-x, -y, \bar A x - \bar B y + A \bar x - B \bar y- c) 
	\end{equation} 
	for $w = (A, B, c) \in W,\ u =(x, y) \in U$. Moreover, the operator $\nabla f_1(\bar w, \bar u) : W \times U \to V$ is surjective and thus its adjoint operator $\nabla f_1(\bar w, \bar u)^* : V \to W \times U$ is injective whenever $\bar u \neq (0, 0)$.
\end{lemma}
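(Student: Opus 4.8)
The plan is to verify the three assertions of Lemma~\ref{lemma_f1} in order: strict differentiability with the stated derivative formula, surjectivity of $\nabla f_1(\bar w, \bar u)$, and injectivity of the adjoint under the condition $\bar u \neq (0,0)$.

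First I would establish strict differentiability. Since $f_1$ is given by explicit formulas that are \emph{polynomial} (in fact affine in $w$ and bilinear in the pairing between $w$ and $u$), each component is continuously Fr\'echet differentiable on the whole space $W \times U$. As noted in the excerpt (following \cite[p.~19]{B-M06}), continuous Fr\'echet differentiability in a neighborhood implies strict differentiability, so this step is essentially a matter of computing the Jacobian. To obtain~\eqref{deri_f1} I would differentiate $f_1$ at $(\bar w, \bar u)$ in the direction $(w, u) = \big((A,B,c),(x,y)\big)$. The first two components $-x$ and $-y$ are linear in $u$ alone, so their derivatives reproduce $-x$ and $-y$. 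For the third component $Ax - By - c$, I would apply the product rule to the bilinear terms: the derivative of $Ax$ at $(\bar A, \bar x)$ in direction $(A, x)$ is $\bar A x + A \bar x$, similarly the derivative of $-By$ is $-(\bar B y + B \bar y)$, and the derivative of $-c$ is $-c$. Collecting these yields the third component $\bar A x - \bar B y + A \bar x - B \bar y - c$, matching~\eqref{deri_f1}.

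Next I would prove surjectivity of the linear operator $L := \nabla f_1(\bar w, \bar u) : W \times U \to V$. Given an arbitrary target $(\xi, \eta, \zeta) \in V = \mathbb{R}^n \times \mathbb{R}^m \times \mathbb{R}^l$, I must produce $(w,u)$ with $L(w,u) = (\xi, \eta, \zeta)$. The first two coordinates force $x = -\xi$ and $y = -\eta$; having fixed these, the third coordinate requires $\bar A x - \bar B y + A \bar x - B \bar y - c = \zeta$, which upon substituting the known $x,y$ becomes a single equation $A \bar x - B \bar y - c = \zeta - \bar A x + \bar B y$ in the still-free parameters $(A, B, c) \in W$. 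Since $c$ ranges freely over all of $\mathbb{R}^l$, I can simply set $A = 0$, $B = 0$, and solve for $c$; this exhibits a preimage, proving surjectivity. Notably this argument uses the free right-hand-side parameter $c$ and so does not depend on $\bar u$.

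Finally I would deduce injectivity of the adjoint $L^* : V \to W \times U$ from surjectivity of $L$, \emph{but only when $\bar u \neq (0,0)$}. In finite dimensions the general principle is that $L$ surjective is equivalent to $L^*$ injective, so one might expect injectivity unconditionally; the subtlety I anticipate as the \textbf{main obstacle} is that surjectivity of $L$ may itself silently require a nondegeneracy hypothesis, and the clean finite-dimensional duality must be reconciled with the stated qualifier $\bar u \neq (0,0)$. I would therefore reexamine the surjectivity argument: when $\bar u = (\bar x, \bar y) = (0,0)$, the terms $A \bar x$ and $B \bar y$ vanish, so the operator $L$ degenerates and the free parameters $A, B$ no longer contribute to the third component, potentially breaking surjectivity; conversely, when $\bar u \neq (0,0)$ the extra freedom from $A,B$ (or from $c$) genuinely enlarges the range. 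The careful step is to confirm which of $\bar x \neq 0$ or $\bar y \neq 0$ is being exploited and to write the adjoint explicitly: for $v' = (x', y', z') \in V$, I would compute $L^* v'$ via $\langle L^* v', (w,u)\rangle = \langle v', L(w,u)\rangle$, expand the pairing into its $W$ and $U$ components, and show that $L^* v' = 0$ forces $z' = 0$ (from the $c$-block, since the coefficient of $c$ in $L$ is $-\mathrm{Id}$) and then $x' = y' = 0$ (from the remaining blocks, using $\bar u \neq (0,0)$ to rule out nontrivial $z'$ contributions). Establishing that the kernel of $L^*$ is trivial precisely under $\bar u \neq (0,0)$ completes the lemma.
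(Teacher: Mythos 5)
Your proof is correct, but your surjectivity argument is genuinely different from (and simpler than) the paper's. The paper assumes $\bar u \neq (0,0)$ from the start, picks a nonzero component $\bar x_{\tilde j}$ of $\bar x$ (or of $\bar y$), sets $B=0$, $c=0$, and builds an explicit matrix $A$ with entries $a_{i\tilde j}=\widetilde v^3_i/\bar x_{\tilde j}$ solving $A\bar x - B\bar y - c = \widetilde v^3$; injectivity of the adjoint is then deduced from surjectivity via \cite[Lemma 1.18]{B-M06}, exactly as you propose. You instead solve the same equation by taking $A=0$, $B=0$ and letting the free right-hand-side parameter $c$ absorb the target, which works \emph{regardless} of $\bar u$. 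This buys something real: it shows the hypothesis $\bar u \neq (0,0)$ is superfluous for this lemma --- surjectivity of $\nabla f_1(\bar w,\bar u)$ and injectivity of its adjoint hold at every point, precisely because the nonhomogeneous term $c$ is present (in the homogeneous setting of the earlier paper, where no such parameter exists, the nondegeneracy of $\bar u$ is genuinely needed, and the paper's proof simply carries that argument over). One caution: your final paragraph undercuts your own argument. The worry that surjectivity ``may itself silently require a nondegeneracy hypothesis,'' and the concluding claim that $\ker \nabla f_1(\bar w,\bar u)^*$ is trivial \emph{precisely} under $\bar u\neq(0,0)$, are both wrong --- your $c$-based construction already proves surjectivity unconditionally, and in the explicit adjoint computation the $c$-block alone forces $z'=0$, after which $x'=y'=0$ follows with no use of $\bar u$ at all. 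Since the lemma only asserts a sufficient condition, this confusion does not create a gap, but the word ``precisely'' should be deleted: you are proving a strictly stronger statement than the one claimed.
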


\begin{proof}
	Fix any $(\bar w, \bar u) \in W \times U$ with $\bar w = (\bar A, \bar B, c)$, $\bar u =(\bar x, \bar y)$. Clearly, the formula 
	\begin{equation*}\label{T_1}
		T_1(\bar w, \bar u)(w, u) := (-x, -y, \bar A x - \bar B y + A \bar x - B \bar y -c),\ \; w = (A, B, c) \in W,\ u =(x, y) \in U 
	\end{equation*} defines a linear operator $T_1(\bar w, \bar u): W \times U \to V$. Setting 
	$$L_1 = \displaystyle\lim_{(w, u)\rightarrow (\bar w, \bar u)}\dfrac{f_1(w, u)-f_1(\bar w,\bar u)- T_1(\bar w,\bar u)((w, u)-(\bar w,\bar u))}{\|(w, u)-(\bar w,\bar u)\|},$$ one has 
	\begin{align*}\begin{array}{lcl}
			L_1 &=	 & \displaystyle\lim_{(w, u)\rightarrow (\bar w,  \bar u)}\Big[
			\dfrac{(-x, -y, Ax - By-c)- (-\bar x, -\bar y, \bar A \bar x - \bar B \bar y - \bar c)}{\|(w-\bar w, u-\bar u)\|}\\
			& & \quad   -\dfrac{\big( -(x -\bar x), -(y -\bar y), \bar A (x-\bar x) - \bar B(y-\bar y) + (A -\bar A)\bar x -(B-\bar B)\bar y - (c-\bar c)\big)}{\|(w-\bar w, u-\bar u)\|}\Big]\nonumber\\
			&=	& \displaystyle\lim_{(w, u)\rightarrow (\bar w,  \bar u)}\bigg (0, 0, \dfrac{(A - \bar A)(x-\bar x) - (B -\bar B)(y-\bar y)}{\|(w-\bar w, u-\bar u)\|} \bigg)\\
			& = & 0,
		\end{array}
	\end{align*}  where the last equality holds true because
	\begin{align*}\begin{array}{lcl}\dfrac{\|(A - \bar A)(x-\bar x) - (B -\bar B)(y-\bar y)\|}{\|(w-\bar w, u-\bar u)\|} & \leq &  \dfrac{\|A - \bar A\|\|x-\bar x\| +\|B -\bar B\|\|y-\bar y\|}{\|(w-\bar w, u-\bar u)\|}\\ & \leq & \|A - \bar A\| + \|B -\bar B\|\end{array}
	\end{align*} and $ \|A - \bar A\| + \|B -\bar B\|\to 0$ as $(w, u)\rightarrow (\bar w,  \bar u)$. We have thus proved that $f_1$ is Fr\'echet differentiable  at $(\bar w, \bar u)$ and $\nabla f_1(\bar w, \bar u)=T_1(\bar w, \bar u)$. Since the function $$\mathcal T_1:W\times U\to L(W\times U, V)$$ assigning each vector $(\bar w, \bar u)\in W\times U$ to the linear operator $T_1(\bar w, \bar u)\in L(W\times U, V)$ is continuous on $W \times U$, we can assert that $f_1$ is strictly differentiable  at $(\bar w, \bar u)$ and its strict derivative is given by~\eqref{deri_f1}; see, e.g., \cite[p.~19]{B-M06}.
	
	Now, suppose that $\bar u \neq (0, 0)$. To verify the surjectivity of $\nabla f_1(\bar w, \bar u) : W \times U \to V$, take any $v=(v^1, v^2, v^3) \in \mathbb{R}^n \times \mathbb{R}^m \times \mathbb{R}^l = V$. We need to show that there exist $$w = (A, B, c) \in \mathbb{R}^{l\times n} \times \mathbb{R}^{l\times m} \times \mathbb{R}^l  =W\quad {\rm and}\quad  u =(x, y) \in \mathbb{R}^n \times \mathbb{R}^m = U$$ with the property
	\begin{equation}\label{eq1}
		\nabla f_1(\bar w, \bar u)(w, u) = v.
	\end{equation}
	From~\eqref{deri_f1} one has
	\begin{align*}
		\nabla f_1(\bar w, \bar u)(w, u) = v & \; \Longleftrightarrow \; (-x, -y, \bar A x - \bar B y + A \bar x - B \bar y - c) = (v^1, v^2, v^3)\\
		& \; \Longleftrightarrow \; \begin{cases}
			x = -v^1\\
			y = -v^2\\
			\bar A x - \bar B y + A \bar x - B \bar y-c = v^3.
		\end{cases}
	\end{align*}
	Thus, in order to have~\eqref{eq1}, one must choose $u=(x, y)=(-v^1, -v^2)$ and $w=(A, B, c)$ so that
	\begin{equation}\label{identity_6}
		A \bar x - B \bar y - c= v^3 + \bar A v^1 - \bar B v^2 .
	\end{equation}
	Put $\widetilde v^3 = v^3 + \bar A v^1 - \bar B v^2$ and assume that $\widetilde v^3=(\widetilde v^3_1, \widetilde v^3_2, \dots, \widetilde v^3_l)$. Then~\eqref{identity_6} is rewritten as 
	\begin{equation*}\label{identity_6a}
		A \bar x - B \bar y - c= \widetilde v^3.
	\end{equation*}
	The property $\bar u\neq (0, 0)$ implies that either $\bar x\neq 0$ or $\bar y\neq 0$. It suffices to consider the case $\bar x\neq 0$, because the  situation $\bar y\neq 0$ can be treated similarly. Suppose that $\bar x= (\bar x_1, \bar x_2, \dots, \bar x_n)$ with $\bar x_{\tilde j} \neq 0$ for some index $\tilde j \in \{1, \dots,  n\}$. Choose $B = 0 \in \mathbb{R}^{l\times m}$, $c =0 \in \mathbb R^l$ and define a matrix $A = (a_{ij}) \in \mathbb{R}^{l\times n}$ by setting $a_{i\tilde j} = \dfrac{\widetilde v^3_i}{\bar x_{\tilde j}}$ for every $i \in \{1, \dots, l\}$ and $a_{ij} = 0$ for every $i \in \{1, \dots, l\},\ j \in \{1, \dots, n\} \setminus \{\tilde j\}$. Then we have
	\begin{align*}
		A \bar x - B \bar y - c= A \bar x = \left (\begin{array}{c} a_{11}\bar x_1 
			+ a_{12} \bar x_2 + \cdots + a_{1n} \bar x_n \\
			a_{21}\bar x_1 + a_{22} \bar x_2 + \cdots + a_{2n} \bar x_n\\
			\vdots \\
			a_{l1}\bar x_1 + a_{l2} \bar x_2 + \cdots + a_{ln} \bar x_n
		\end{array}\right) = \left(\begin{array}{c}\widetilde v^3_1\\
			\widetilde v^3_2\\
			\vdots\\
			\widetilde v^3_l \end{array}\right).
	\end{align*}
	This means that the chosen element $w=(A, B, c)$ satisfies~\eqref{identity_6}. We have thus proved the surjectivity of the linear operator $\nabla f_1(\bar w, \bar u)$. The injectivity of $\nabla f_1(\bar w, \bar u)^*$ follows from the last fact and~\cite[Lemma 1.18]{B-M06}.
\end{proof}

\medskip
Second, we will obtain a formula for the limiting coderivative of~$G_1$ at any point in its graph.

\begin{lemma}\label{lemma_G1}
	The set-valued map $G_1: W\times U \rightrightarrows V$ in~\eqref{G1} has closed graph. For  any point $(\bar w, \bar u, \bar v) \in \gph G$, the limiting coderivative $D^*G_1(\bar w, \bar u, \bar v) : V \rightrightarrows W \times U$ is given by
	\begin{align}\label{coderi_G1}
		D^*G_1(\bar w, \bar u, \bar v)(v')  = \begin{cases}
			(0, 0), \quad &\mbox{if } - v' \in N\left(\bar v; \mathcal C \times \mathcal Q \times \{0_{\mathbb{R}^l}\} \right)\\
			\emptyset, \quad &\mbox{otherwise,}
		\end{cases}
	\end{align}
	where $v'\in V$ is arbitrarily chosen. 
\end{lemma}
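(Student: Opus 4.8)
The plan is to exploit the fact that the formula~\eqref{G1} does not depend on the argument $(w,u)$, so that $G_1$ is a \emph{constant} set-valued map and its graph is a product set. Writing $\Lambda:=\mathcal C\times\mathcal Q\times\{0_{\mathbb R^l}\}\subset V$, one has
\[
	\gph G_1=(W\times U)\times\Lambda .
\]
For the closedness claim I would simply observe that $\mathcal C$ and $\mathcal Q$ are closed by hypothesis and $\{0_{\mathbb R^l}\}$ is closed, so $\Lambda$ is closed as a product of closed sets; since $W\times U$ is the whole (finite-dimensional) space, the product $(W\times U)\times\Lambda$ is closed, i.e.\ $G_1$ has closed graph.

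For the coderivative, the key tool is the product rule for the limiting normal cone, which holds for arbitrary closed sets in finite dimensions: $N\big((\bar a,\bar b);\Omega_1\times\Omega_2\big)=N(\bar a;\Omega_1)\times N(\bar b;\Omega_2)$ (see~\cite{B-M06}). Applied to $\gph G_1=(W\times U)\times\Lambda$ at the point $\big((\bar w,\bar u),\bar v\big)$, this gives
\[
	N\big(((\bar w,\bar u),\bar v);\gph G_1\big)=N\big((\bar w,\bar u);W\times U\big)\times N(\bar v;\Lambda).
\]
The first factor is where the special structure pays off: since $W\times U$ is the entire space, $N\big((\bar w,\bar u);W\times U\big)=\{0\}$, and the normal cone collapses to $\{0_{W\times U}\}\times N(\bar v;\Lambda)$.

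It then remains to feed this into the definition of the coderivative. By definition, $(w',u')\in D^*G_1(\bar w,\bar u,\bar v)(v')$ if and only if $\big((w',u'),-v'\big)\in N\big(((\bar w,\bar u),\bar v);\gph G_1\big)$, which by the previous step is equivalent to $(w',u')=(0,0)$ together with $-v'\in N(\bar v;\Lambda)$. Hence the value of the coderivative is the singleton $\{(0,0)\}$ precisely when $-v'\in N\big(\bar v;\mathcal C\times\mathcal Q\times\{0_{\mathbb R^l}\}\big)$ and is empty otherwise, which is exactly~\eqref{coderi_G1}.

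I do not anticipate any serious obstacle here, as the computation rests on the product structure of $\gph G_1$ together with two standard facts—the product rule for limiting normal cones and the triviality of the normal cone to a full space. The only point demanding a little care is the bookkeeping of the factors, so that the component $(w',u')$, which is forced to vanish by the trivial normal cone to $W\times U$, is kept distinct from the component $v'$ that carries all the remaining information through $N(\bar v;\Lambda)$.
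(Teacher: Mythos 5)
Your proposal is correct and follows essentially the same route as the paper: both arguments rest on the product structure $\gph G_1=(W\times U)\times\big(\mathcal C\times\mathcal Q\times\{0_{\mathbb{R}^l}\}\big)$, the product rule for limiting normal cones, and the triviality of the normal cone to the full space $W\times U$, fed into the definition of the coderivative. The only difference is cosmetic---you cite the product rule explicitly, whereas the paper applies it silently inside a chain of set equalities.
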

\begin{proof}
	The closedness of $\mathcal C$ and $\mathcal Q$ together with the formula $\gph G_1 = W \times U \times (\mathcal C \times \mathcal Q \times \{0_{\mathbb{R}^l}\})$ shows that $G_1$ has closed graph. For any $(\bar w, \bar u, \bar v) \in \gph G_1$ and $v'\in V$, one has
	\begin{align*}
		& D^*G_1(\bar w, \bar u, \bar v)(v')\\ 
		&  = \{(w', u') \in W \times U  :  (w', u', -v')\in N \big((\bar w, \bar u, \bar v); \gph G_1\big)\}\\
		& = \{(w', u') \in W \times U  :  (w', u', -v')\in N \big((\bar w, \bar u, \bar v); W\! \times\! U \times (\mathcal C \! \times\! \mathcal Q \times\! \{0_{\mathbb{R}^l}\})\big)\}\\
		& = \{(w', u') \in W \times U  :  (w', u', -v')\in \{0_W\} \times \{0_U\} \times N \big( \bar v; \mathcal C \times \mathcal Q \times \{0_{\mathbb{R}^l}\}\big)\}.
	\end{align*}
	Thus, formula~\eqref{coderi_G1} is valid.
\end{proof}

Next, let us prove Theorem~\ref{Lipschitz-like_phi_thm}.

\medskip
\textbf{Proof of Theorem~\ref{Lipschitz-like_phi_thm}.}\ \, Suppose that $(\bar A, \bar B, \bar c) \in \mathbb{R}^{l\times n} \times \mathbb{R}^{l\times m} \times \mathbb R^l$, $(\bar x, \bar y) \in \Phi (\bar A,\bar B, \bar c)$, $\mathcal C$ is normally regular at $\bar x$, and $\mathcal Q$ is normally regular at $\bar y$.  

Before verifying the two assertions of the theorem, we need to make some preparations.

Put $\bar w=(\bar A, \bar B, \bar c)$, $\bar u = (\bar x, \bar y)$ and $\bar v:= -f_1(\bar w, \bar u)=(\bar x, \bar y, - \bar A \bar x + \bar B \bar y+\bar c).$ Thanks to formula~\eqref{phi_solution map}, we can study the Lipschitz-likeness of $\Phi$ at $(\bar w, \bar u) \in \gph \Phi$ by using Theorem~\ref{Thm. 4.2-Mor-2004-JOGO}, which deals with the Lipschitz-likeness of the solution map $S$ defined by~\eqref{solution_map} of the generalized equation~\eqref{para_general_eq}. Observe that $f_1$ is strictly differentiable at $(\bar w, \bar u)$ by Lemma~\ref{lemma_f1} and  $G_1$ has closed graph by Lemma~\ref{lemma_G1}. Moreover, since $\gph G_1 = W \times U \times (\mathcal C \times \mathcal Q \times \{0_{\mathbb{R}^l}\})$, $\gph G_1$ is normally regular at $(\bar w, \bar u, \bar v)$. Thus, $G_1$ is graphically regular at $(\bar w, \bar u, \bar v)$. In our setting, condition~\eqref{Lipschitz-like condition} reads as follows:
\begin{equation*}
	\big[(w', 0) \in \nabla f_1(\bar w, \bar u)^*(v')+D^*G_1(\bar w, \bar u, \bar v)(v') \big]\Longrightarrow [w'=0,\, v'=0].
\end{equation*}
According to~\eqref{coderi_G1}, this implication means the following: If $w'=(A', B', c') \in W$ and $v'=(x', y', z') \in V$ are such that 
\begin{equation}\label{identityphi_1a}
	- v' \in N\left(\bar v; \mathcal C \times \mathcal Q\times \{0_{\mathbb{R}^l}\} \right)
\end{equation}
and 
\begin{align}\label{identityphi_2}
	(w', 0) = \nabla f_1(\bar w, \bar u)^*(v'),
\end{align}
then one must have $w' = 0$ and $v'=0$. 

Since $N\left(\bar v; \mathcal C \times \mathcal Q\times \{0_{\mathbb{R}^l}\} \right) = N(\bar x, \mathcal C) \times N(\bar y, \mathcal Q) \times \mathbb{R}^l,$ the inclusion~\eqref{identityphi_1a} holds if and only if
\begin{align}\label{identityphi_1b}
	x' \in -N(\bar x, \mathcal C)\quad {\rm and}\quad  y' \in -N(\bar y, \mathcal Q).
\end{align}  Clearly, condition~\eqref{identityphi_2} can be rewritten as 
\begin{align*}
	\langle w', w \rangle =  \langle \nabla f_1(\bar w, \bar u)^*(v'), (w, u) \rangle, \quad \forall w = (A, B, c) \in W, \, u =(x, y) \in U
\end{align*}
or, equivalently,
\begin{align*}
	\langle w', w \rangle = \langle v', \nabla f_1(\bar w, \bar u) (w, u) \rangle,\quad \forall w = (A, B, c) \in W, \, u =(x, y) \in U.
\end{align*}
Therefore, by~\eqref{deri_f1} we can infer that~\eqref{identityphi_2} is satisfied if and only if 
\begin{align}\label{identityphi_2a}
	\langle w', w \rangle = -\langle x', x\rangle - \langle y', y \rangle+ \langle z', \bar A x - \bar B y + A \bar x - B \bar y - c\rangle 
\end{align}
for all $ w = (A, B, c) \in W$ and $u =(x, y) \in U$. We have thus shown that, in our setting, condition~\eqref{Lipschitz-like condition} means the following:
\begin{equation}\label{Lipschitz-like conditionphi}
	\left[w'=(A', B', c')\ {\rm and}\  v'=(x', y', z')\  {\rm satisfy}\  \eqref{identityphi_1b}\ {\rm and}\  \eqref{identityphi_2a}\right] \Longrightarrow \left[w' = 0, \,  v'=0\right].
\end{equation} 

(\textit{Sufficiency}) Suppose that condition~\eqref{Lipschitz-like_S_con} is fulfilled. To prove that the map $\Phi$ is Lipschitz-like at $(\bar w, \bar u)$ by the first assertion of Theorem~\ref{Thm. 4.2-Mor-2004-JOGO}, it suffices to verify that~\eqref{Lipschitz-like conditionphi} holds. 

Fix any $w'=(A', B', c')$ and $v'=(x', y', z')$ satisfying~\eqref{identityphi_1b} and~\eqref{identityphi_2a}. Applying equality~\eqref{identityphi_2a} with $w=(A, B, c) = (0, 0, 0)$ and $u=(x, y) = (x, 0)$ yields 
\begin{equation}\label{identityphi_2b}
	\langle x', x\rangle = \langle z', \bar A x\rangle,\quad \forall x\in \mathbb{R}^n.
\end{equation}
It follows that $x' = \bar A^{\rm T}z'$. So, by the first inclusion in~\eqref{identityphi_1b} we have $\bar A^{\rm T}z' \in -N(\bar x, \mathcal C)$, i.e.,
\begin{equation}\label{identityphi_3}
	z' \in \big(\bar A^{\rm T}\big)^{-1}\big(-N(\bar x; \mathcal C)\big).
\end{equation} Analogously, substituting $w=(A, B, c)=(0,0, 0)\in W$ and $u=(x, y)=(0, y)\in U$ to~\eqref{identityphi_2a} gives
\begin{equation}\label{identityphi_2c}
	\langle y', y\rangle = -\langle z', \bar B y\rangle, \quad \forall y\in \mathbb{R}^m.
\end{equation}
This is equivalent to
$y' =-\bar B^{\rm T}z'$.  Hence, one has $\bar B^{\rm T}z' \in N(\bar y, \mathcal Q)$ by the second inclusion in~\eqref{identityphi_1b}. Therefore, 
\begin{equation}\label{identityphi_4}
	z' \in \big(\bar B^{\rm T}\big)^{-1}\big(N(\bar y; \mathcal Q)\big).
\end{equation}
Thanks to~\eqref{Lipschitz-like_S_con}, from~\eqref{identityphi_3} and~\eqref{identityphi_4} we get $z'=0$. Consequently,  using~\eqref{identityphi_2b} and~\eqref{identityphi_2c}, we obtain $x'=0$ and $y' = 0$. Now, since $x' = 0$, $y'=0$, and $z'=0$, by~\eqref{identityphi_2a} we can infer that $w'=0$. We have shown that the implication~\eqref{Lipschitz-like conditionphi} is fulfilled. As a consequence, $\Phi$ is Lipschitz-like at $(\bar w, \bar u)$.

(\textit{Necessity}) Suppose that $\Phi$ is Lipschitz-like at $(\bar w, \bar u)=\big((\bar A, \bar B, \bar c),(\bar x, \bar y)\big)$ \textit{and the additional assumption $\bar u\neq (0, 0)$, where $\bar u=(\bar x, \bar y)$, is satisfied}. To apply the second assertion of Theorem~\ref{Thm. 4.2-Mor-2004-JOGO}, we need to verify condition~\eqref{regularity condition}.  In our setting, the latter is formulated as follows:
\begin{equation*}\label{regularityphi}
	\big[(0, 0) \in \nabla f_1(\bar w, \bar u)^*(v')+D^*G_1(\bar w, \bar u, \bar v)(v') \big]\Longrightarrow [v'=0].
\end{equation*}

Let $v'=(x', y', z') \in V$ be such that $(0, 0) \in \nabla f_1(\bar w, \bar u)^*(v')+D^*G_1(\bar w, \bar u, \bar v)(v').$ Because of~\eqref{coderi_G1}, the last inclusion means that $- v' \in N\left(\bar v; \mathcal C \times \mathcal Q\times \{0_{\mathbb{R}^l}\} \right)$ and
\begin{equation}\label{regularityphi_2}
	(0, 0) = \nabla f_1(\bar w, \bar u)^*(v').
\end{equation} 
Since  $\bar u \neq 0$,  by Lemma~\ref{lemma_f1} we know that $\nabla f_1(\bar w, \bar u)^* : V \to W \times U$ is an injective linear operator. So,~\eqref{regularityphi_2} yields $v'=(0, 0, 0)$. We have thus shown that condition~\eqref{regularity condition} is satisfied. Therefore, as $\Phi $ is Lipschitz-like at $(\bar w, \bar u)$, by the second assertion of Theorem~\ref{Thm. 4.2-Mor-2004-JOGO} we obtain~\eqref{Lipschitz-like condition}. In other words,~\eqref{Lipschitz-like conditionphi} is valid thanks to the preparations given before the proof of ``Sufficiency". We will use~\eqref{Lipschitz-like conditionphi} to prove that the regularity condition~\eqref{Lipschitz-like_S_con} holds.

Take any vector $z'$ with
\begin{align}\label{z'_phi} z'\in \big(\bar A^{\rm T}\big)^{-1}\big(-N(\bar x; \mathcal C)\big) \cap \big(\bar B^{\rm T}\big)^{-1}\big(N(\bar y; \mathcal Q)\big).
\end{align}
Then, there exist $x' \in -N(\bar x; \mathcal C)$ and $y' \in - N(\bar y; \mathcal Q)$ such that $\bar A^{\rm T}z'=x'$ and $\bar B^{\rm T}z'=-y'$. It follows that $\langle x', x\rangle = \langle z', \bar A x\rangle$ for all $x\in \mathbb{R}^n$ and $\langle y', y\rangle = -\langle z', \bar B y\rangle$ for all $y\in \mathbb{R}^m$. Consequently, one has
\begin{align}\label{equality0_phi}
	0= -\langle x', x\rangle - \langle y', y\rangle + \langle z', \bar A x - \bar B y\rangle,\ \; \forall x\in \mathbb{R}^n,\;  \forall y\in \mathbb{R}^m.
\end{align}

To proceed, let us show that there exists $w'=(A',B', c')\in W$ such that $w'$ together with the chosen vectors $x', y', z'$ satisfy~\eqref{identityphi_2a} for all $ w = (A, B, c) \in W$ and $u =(x, y) \in U$. Suppose that $\bar x = (\bar x_1, \bar x_2, \dots, \bar x_n)$, $\bar y = (\bar y_1, \bar y_2, \dots, \bar y_m)$, and $z' = (z'_1, z'_2, \dots, z'_l)$. Choose $w'=(A',B', c')$ with 
\begin{equation*}\label{A'-B'_phi}
	A':=\left (\begin{array}{c c c c} 
		z'_1\bar x_1 & z'_1 \bar x_2 &\cdots &z'_1\bar x_n\\
		z'_2\bar x_1 & z'_2 \bar x_2 &\cdots &z'_2\bar x_n\\
		\vdots & \vdots &\cdots &\vdots\\
		z'_l\bar x_1 & z'_l \bar x_2 &\cdots &z'_l\bar x_n
	\end{array}\right),\quad
	B':=\left (\begin{array}{c c c c} 
		-z'_1\bar y_1 & -z'_1 \bar y_2 &\cdots &-z'_1\bar y_m\\
		-z'_2\bar y_1 & -z'_2 \bar y_2 &\cdots &-z'_2\bar y_m\\
		\vdots & \vdots &\cdots &\vdots\\
		-z'_l\bar y_1 & -z'_l \bar y_2 &\cdots &-z'_l\bar y_m
	\end{array}\right),
\end{equation*} and $c':=-z'$. Fix any $w = (A, B, c) \in W$ with
\begin{equation*}
	A=\left (\begin{array}{c c c c} 
		a_{11} & a_{12} &\cdots &a_{1n}\\
		a_{21} & a_{22} &\cdots &a_{2n}\\
		\vdots & \vdots &\cdots &\vdots\\
		a_{l1}& a_{l2} &\cdots &a_{ln}
	\end{array}\right),\quad
	B=\left (\begin{array}{c c c c} 
		b_{11} & b_{12} &\cdots &b_{1m}\\
		b_{21} & b_{22} &\cdots &b_{2m}\\
		\vdots & \vdots &\cdots &\vdots\\
		b_{l1}& b_{l2} &\cdots &b_{lm}
	\end{array}\right).
\end{equation*}
Then, we have $$\langle A', A\rangle = \displaystyle \sum_{i=1}^l\sum_{j=1}^{n} (z'_i a_{ij} \bar x_j) = \displaystyle \sum_{i=1}^l z'_i \left(\displaystyle \sum_{j=1}^n a_{ij} \bar x_j\right) = \langle z', A \bar x \rangle,$$
$$\langle B', B\rangle = \displaystyle \sum_{i=1}^l \sum_{j=1}^m (-z'_i b_{ij} \bar y_j) = - \displaystyle \sum_{i=1}^l z'_i \left(\displaystyle \sum_{j=1}^{m} b_{ij} \bar y_j\right) = - \langle z', B \bar y \rangle,$$ and $\langle c', c \rangle = - \langle z', c \rangle$. It follows that $$\langle w' , w \rangle =\langle A', A\rangle+ \langle B', B\rangle + \langle c', c\rangle = \langle z', A \bar x - B \bar x - c\rangle$$ for all $w = (A, B, c) \in W$. This and~\eqref{equality0_phi} imply that $w'=(A', B', c')$ and $(x', y', z')$ fulfill the equality~\eqref{identityphi_2a} for all $ w = (A, B, c) \in W$ and $u =(x, y) \in U$. Thus, keeping in mind that $x'$ and $y'$ satisfy~\eqref{identityphi_1b} by their constructions, we deduce from~\eqref{Lipschitz-like conditionphi} where  $v':=(x', y', z')$ that $w' = 0$ and $(x', y', z') = (0, 0, 0)$. In particular, we have $z' =0$. As the element $z'$ satisfying~\eqref{z'_phi} was chosen arbitrarily, we have thus shown that~\eqref{Lipschitz-like_S_con} is valid.

The proof is complete. $\hfill\Box$

\medskip
Since a convex set is normally regular at any point belonging it, the next result follows directly from Theorem~\ref{Lipschitz-like_phi_thm}.

\begin{theorem}
	Suppose that $\mathcal C$ and $\mathcal Q$ are convex sets, $(\bar A, \bar B, \bar c) \in \mathbb{R}^{l\times n} \times \mathbb{R}^{l\times m} \times \mathbb R^l$ and $(\bar x, \bar y) \in \Phi (\bar A,\bar B, \bar c)$. If~\eqref{Lipschitz-like_S_con} holds, then the solution map $\Phi$ of {\rm (NSEP)} is Lipschitz-like at $\big((\bar A, \bar B, \bar c),(\bar x, \bar y)\big)$. Conversely, if $\Phi$ is Lipschitz-like at $\big((\bar A, \bar B, \bar c),(\bar x, \bar y)\big)$  and $(\bar x, \bar y) \neq (0, 0)$, then~\eqref{Lipschitz-like_S_con} is fulfilled.
\end{theorem}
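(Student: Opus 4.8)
The plan is to recognize this statement as an immediate corollary of Theorem~\ref{Lipschitz-like_phi_thm}, so that the only thing I need to supply is a verification that the normal-regularity hypotheses of that theorem are automatically met under the convexity assumption. First I would note that, by the definition of the solution map in~\eqref{phi}, the inclusion $(\bar x,\bar y)\in\Phi(\bar A,\bar B,\bar c)$ forces $(\bar x,\bar y)\in\mathcal C\times\mathcal Q$; in particular $\bar x\in\mathcal C$ and $\bar y\in\mathcal Q$. Then I would invoke the fact, recalled in Section~\ref{sect_Preliminaries} on the basis of~\cite[Prop.~1.5]{B-M06}, that a convex set is normally regular at every one of its points. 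Applying this to the convex sets $\mathcal C$ and $\mathcal Q$ at $\bar x$ and $\bar y$, respectively, shows that $\mathcal C$ is normally regular at $\bar x$ and $\mathcal Q$ is normally regular at $\bar y$.

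With the normal-regularity hypotheses now in force, both assertions follow by directly quoting Theorem~\ref{Lipschitz-like_phi_thm}. For the first (sufficiency) claim, if the regularity condition~\eqref{Lipschitz-like_S_con} holds, then all hypotheses of the forward implication of Theorem~\ref{Lipschitz-like_phi_thm} are satisfied, whence $\Phi$ is Lipschitz-like at $\big((\bar A,\bar B,\bar c),(\bar x,\bar y)\big)$. For the second (necessity) claim, if $\Phi$ is Lipschitz-like at that point and $(\bar x,\bar y)\neq(0,0)$, then the converse implication of Theorem~\ref{Lipschitz-like_phi_thm}---whose sole extra hypothesis is precisely $(\bar x,\bar y)\neq(0,0)$---yields~\eqref{Lipschitz-like_S_con}.

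Since this is purely a specialization, I do not expect any genuine obstacle: no new computation of coderivatives, adjoint operators, or surjectivity is needed, as all of that work is already absorbed into Theorem~\ref{Lipschitz-like_phi_thm}. The single point that deserves explicit mention is the membership $\bar x\in\mathcal C$ and $\bar y\in\mathcal Q$, which is what licenses the appeal to the normal regularity of convex sets at those particular points; omitting it would leave the regularity hypotheses of Theorem~\ref{Lipschitz-like_phi_thm} formally unjustified. Accordingly, the proof reduces to these two short observations, exactly as the sentence preceding the statement (``the next result follows directly from Theorem~\ref{Lipschitz-like_phi_thm}'') anticipates.
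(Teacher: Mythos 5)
Your proposal is correct and matches the paper's own argument exactly: the paper disposes of this statement with the single remark that convex sets are normally regular at each of their points, so the result follows directly from Theorem~\ref{Lipschitz-like_phi_thm}. Your additional note that $(\bar x,\bar y)\in\Phi(\bar A,\bar B,\bar c)$ guarantees $\bar x\in\mathcal C$ and $\bar y\in\mathcal Q$ is a sensible (if implicit in the paper) justification for invoking normal regularity at precisely those points.
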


\section{Nonhomogeneous Split Feasibility Problems}\label{sect_NSFP} \setcounter{equation}{0}
Let $\mathcal C\subset \mathbb{R}^n$, $\mathcal Q\subset \mathbb{R}^m$ be nonempty closed sets and let there be given a matrix $A\in \mathbb{R}^{m\times n}$ and a vector $b \in \mathbb R^m$. The problem of finding an $x \in \mathcal C$ such that $Ax +b \in \mathcal Q$ is called a \textit{nonhomogeneous split feasibility problem} (NSFP). 

Note that (NSFP) covers, for instance, the problems of finding solutions to 

- a linear equation (where $\mathcal C = \mathbb R^n$ and $\mathcal Q = \{0\}$, 

- the linear constraint system considered in~\cite{Huyen_Yen16} (where $\mathcal C = \mathbb R^n$), 

- the set-constraint linear system as discussed in~\cite[Appendix A]{Rusz06} (where $\mathcal C$ is convex and $\mathcal Q = \{0\}$). 

The classical split feasibility problem (SFP) introduced by Censor and Elfving~\cite{Censor_Elfving_94} is a particular case of (NSFP) with $\mathcal C,\, \mathcal Q$ being convex and $b = 0$. Recently, Chen et al.~\cite{Chen_etal_20} considered (SFP) in a nonconvex setting. The authors proposed a difference-of-convex approach to solve (SFP) and pointed out some applications to matrix factorizations and outlier detection. 

This section is devoted to the solution stability of (NSFP) when the data $A, b$ undergo small perturbations. The solution map $\Psi : \mathbb{R}^{m\times n} \times \mathbb R^m \rightrightarrows \mathbb{R}^{n}$  of (NSFP) is defined by 
\begin{equation}\label{psi}
	\Psi (A, b):= \big\{x \in \mathcal C \, : \, Ax +b \in \mathcal Q\big\}, \quad (A, b) \in \mathbb{R}^{m\times n} \times \mathbb R^m.
\end{equation}

The Lipschitz-likeness of $\Psi$ at a reference point in its graph can be characterized as follows.

\begin{theorem}\label{Lipschitz-like_psi_thm}
	Let $(\bar A, \bar b) \in \mathbb{R}^{m\times n} \times \mathbb R^m$ be given and let $\bar x \in \Psi(\bar A, \bar b)$. Suppose that $\mathcal C$ is normally regular at $\bar x$ and $\mathcal Q$ is normally regular at $\bar A \bar x + \bar b$. If the regularity condition
	\begin{equation}\label{Lipschitz-like_psi_con}
		\big(\bar A^{\rm T}\big)^{-1}(-N(\bar x; \mathcal C)) \cap N(\bar A \bar x+\bar b; \mathcal Q)=\{0\}
	\end{equation}
	holds, then the solution map $\Psi $ of {\rm (NSFP)} is Lipschitz-like at $\big((\bar A, \bar b), \bar x\big)$. Conversely, if $\Psi $ is Lipschitz-like at $\big((\bar A, \bar b), \bar x\big)$ and if $\bar x \neq 0$, then~\eqref{Lipschitz-like_psi_con} is valid.
\end{theorem}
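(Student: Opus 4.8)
The plan is to mirror the proof of Theorem~\ref{Lipschitz-like_phi_thm} by transforming (NSFP) into a parametric generalized equation of the form~\eqref{para_general_eq} and then invoking Theorem~\ref{Thm. 4.2-Mor-2004-JOGO}. First I would set $W:=\mathbb{R}^{m\times n}\times\mathbb{R}^m$ (the parameter space), $U:=\mathbb{R}^n$ (the decision-variable space), and $V:=\mathbb{R}^n\times\mathbb{R}^m$. I would introduce the single-valued map $f_2:W\times U\to V$ by $f_2(w,u):=(-x,\,-(Ax+b))$ for $w=(A,b)\in W$, $u=x\in U$, together with the set-valued map $G_2:W\times U\rightrightarrows V$ given by $G_2(w,u):=\mathcal C\times\mathcal Q$. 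The key observation is that $x\in\Psi(A,b)$ precisely when $0\in f_2(w,u)+G_2(w,u)$, since the inclusion forces $x\in\mathcal C$ from the first coordinate and $Ax+b\in\mathcal Q$ from the second. This realizes $\Psi$ as the solution map to a generalized equation in the exact format handled by Theorem~\ref{Thm. 4.2-Mor-2004-JOGO}.

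Next I would establish the two auxiliary facts that feed the hypotheses of Theorem~\ref{Thm. 4.2-Mor-2004-JOGO}, in direct analogy with Lemmas~\ref{lemma_f1} and~\ref{lemma_G1}. For $f_2$, a computation like the one in Lemma~\ref{lemma_f1} shows that $f_2$ is strictly differentiable at $(\bar w,\bar u)$ with
\begin{equation*}
	\nabla f_2(\bar w,\bar u)(w,u)=\big(-x,\,-(\bar A x+A\bar x+b)\big),\quad w=(A,b),\ u=x,
\end{equation*}
and that this operator is surjective whenever $\bar u=\bar x\neq 0$ (one solves for $x=-v^1$ and then chooses $A$ to hit any prescribed residual using a nonzero component of $\bar x$, taking $b=0$), so its adjoint is injective by~\cite[Lemma~1.18]{B-M06}. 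For $G_2$, since $\gph G_2=W\times U\times(\mathcal C\times\mathcal Q)$ with $\mathcal C,\mathcal Q$ closed, the graph is closed and the coderivative reads
\begin{equation*}
	D^*G_2(\bar w,\bar u,\bar v)(v')=\begin{cases}(0,0),&\text{if }-v'\in N(\bar v;\mathcal C\times\mathcal Q),\\ \emptyset,&\text{otherwise.}\end{cases}
\end{equation*}
Because $\mathcal C\times\mathcal Q$ is normally regular at $\bar v=(\bar x,\bar A\bar x+\bar b)$ (the product of normally regular sets is normally regular, and $N(\bar v;\mathcal C\times\mathcal Q)=N(\bar x;\mathcal C)\times N(\bar A\bar x+\bar b;\mathcal Q)$), the map $G_2$ is graphically regular at $(\bar w,\bar u,\bar v)$, so all structural hypotheses of Theorem~\ref{Thm. 4.2-Mor-2004-JOGO} hold.

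With these preparations, I would unwind condition~\eqref{Lipschitz-like condition} in the present setting exactly as in the proof of Theorem~\ref{Lipschitz-like_phi_thm}. Writing $w'=(A',b')$ and $v'=(x',z')$, the normal-cone membership splits as $x'\in-N(\bar x;\mathcal C)$ and $z'\in-N(\bar A\bar x+\bar b;\mathcal Q)$, while the adjoint equation $(w',0)=\nabla f_2(\bar w,\bar u)^*(v')$ becomes, after testing against $u=x$ with $w=0$, the relation $x'=\bar A^{\mathrm T}z'$. The first membership then gives $\bar A^{\mathrm T}z'\in-N(\bar x;\mathcal C)$, i.e. $z'\in(\bar A^{\mathrm T})^{-1}(-N(\bar x;\mathcal C))$, and the second gives $-z'\in N(\bar A\bar x+\bar b;\mathcal Q)$. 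For sufficiency, condition~\eqref{Lipschitz-like_psi_con} then forces $z'=0$, whence $x'=0$, and testing the adjoint equation against the parameter directions recovers $w'=0$, so~\eqref{Lipschitz-like condition} holds and Theorem~\ref{Thm. 4.2-Mor-2004-JOGO} yields the Lipschitz-likeness. For necessity, when $\bar x\neq 0$ the injectivity of $\nabla f_2(\bar w,\bar u)^*$ verifies the regularity condition~\eqref{regularity condition}, so~\eqref{Lipschitz-like condition} must hold; given any $z'$ in the intersection on the left of~\eqref{Lipschitz-like_psi_con}, I would construct an explicit $w'=(A',b')$ (with $A'$ having rows $z'_i\bar x$ and $b':=-z'$, mirroring the construction in the proof of Theorem~\ref{Lipschitz-like_phi_thm}) so that $(w',v')$ satisfies the unwound form of~\eqref{Lipschitz-like condition}, forcing $z'=0$.

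The main obstacle I anticipate is purely bookkeeping rather than conceptual: one must be careful that the sign convention $z'\in-N(\bar A\bar x+\bar b;\mathcal Q)$ coming from the coderivative matches the unsigned $N(\bar A\bar x+\bar b;\mathcal Q)$ appearing in~\eqref{Lipschitz-like_psi_con}, which works out because in (NSFP) the constraint $Ax+b\in\mathcal Q$ (rather than $Ax-By=c$) removes the extra minus sign that the $y$-variable produced in the (NSEP) analysis. The surjectivity argument for $\nabla f_2(\bar w,\bar u)^*$ requires $\bar x\neq 0$, which is exactly why the necessity half carries the hypothesis $\bar x\neq 0$; the sufficiency half needs no such restriction.
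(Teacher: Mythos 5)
Your reduction of (NSFP) to the generalized equation $0\in f_2(w,u)+G_2(w,u)$, the two auxiliary lemmas, and the appeal to Theorem~\ref{Thm. 4.2-Mor-2004-JOGO} coincide exactly with the paper's own proof; the problem is a sign error in the one computation on which everything hinges. Writing $v'=(x',z')$, the adjoint equation $(w',0)=\nabla f_2(\bar w,\bar u)^*(v')$ tested at $w=(A,b)=(0,0)$ and arbitrary $x$ gives
\begin{equation*}
	0=-\langle x',x\rangle-\langle z',\bar A x\rangle\quad\text{for all }x\in\mathbb R^n,
\end{equation*}
that is, $x'=-\bar A^{\rm T}z'=\bar A^{\rm T}(-z')$, not $x'=\bar A^{\rm T}z'$ as you assert. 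With the correct sign, the membership $x'\in-N(\bar x;\mathcal C)$ yields $-z'\in\big(\bar A^{\rm T}\big)^{-1}\big(-N(\bar x;\mathcal C)\big)$, so the \emph{single} vector $-z'$ lies in both sets of~\eqref{Lipschitz-like_psi_con}, and the regularity condition gives $z'=0$ (this is the paper's argument, where the common vector is denoted $-v'$). Under your sign you instead obtain $z'\in\big(\bar A^{\rm T}\big)^{-1}\big(-N(\bar x;\mathcal C)\big)$ and $-z'\in N(\bar A\bar x+\bar b;\mathcal Q)$ --- two \emph{different} vectors in the two sets --- and from this~\eqref{Lipschitz-like_psi_con} does not force $z'=0$: the inference ``$z'\in S_1$, $-z'\in S_2$, $S_1\cap S_2=\{0\}$ $\Rightarrow$ $z'=0$'' is invalid in general (take $S_1=\mathbb R_+$, $S_2=-\mathbb R_+$ in $\mathbb R$; this cone configuration actually arises under the theorem's hypotheses, e.g.\ $n=m=1$, $\bar A=1$, $\bar b=-\bar x$, $\mathcal C=[\bar x,+\infty)$, $\mathcal Q=\mathbb R_+$). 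So your sufficiency argument, as written, is a non sequitur.

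The same slip propagates into your necessity step: copying the (NSEP) choice $b':=-z'$ is wrong here, because the parameter enters $f_2$ through $-(A\bar x+b)$ whereas it enters $f_1$ through $+(A\bar x-B\bar y-c)$, and this extra minus sign flips the roles. Given $z'$ in the intersection in~\eqref{Lipschitz-like_psi_con}, the coderivative variable you must feed into the unwound condition~\eqref{Lipschitz-like conditionpsi} is $-z'$, paired with first component $x'=\bar A^{\rm T}z'\in-N(\bar x;\mathcal C)$; the adjoint identity then forces $b'=+z'$ and $A'=(z'_i\bar x_j)$, exactly as in the paper. Both repairs are mechanical and the architecture of your proof is the right one, but as stated the two key inferences do not follow from your premises, so the bookkeeping needs to be redone consistently.
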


Similarly as it was done the preceding section, to prove Theorem~\ref{Lipschitz-like_psi_thm} we will transform the nonhomogeneous split feasibility problem in question to a generalized equation and apply Theorem~\ref{Thm. 4.2-Mor-2004-JOGO}. 

\medskip
Put  $W = \mathbb{R}^{m\times n} \times \mathbb R^m$. Consider the function $f_2: W\times \mathbb{R}^n \to \mathbb{R}^n \times \mathbb{R}^m$ given by   
\begin{equation}\label{f2}
	f_2(w, x):= (-x, -Ax -b), \quad w=(A, b) \in  W, \ x \in \mathbb{R}^n
\end{equation}
and the set-valued map $G_2: W\times \mathbb{R}^{m\times n} \rightrightarrows \mathbb{R}^n \times \mathbb R^m$ with 
\begin{equation}\label{G2}
	G_2(w, x):= \mathcal C \times \mathcal Q, \quad w=(A, b) \in  W, \ x \in \mathbb{R}^n.
\end{equation}
Then, the solution map~\eqref{psi} can be treated as the solution map $w\mapsto \Psi(w)$ of the parametric generalized equation 
$0 \in f_2(w, x) + G_2(w, x)$. Namely, one has
\begin{equation}\label{psi_solution map}
	\Psi(w) = \left\{x \in \mathbb{R}^n \; : \; 0 \in f_2(w, x) + G_2(w, x)\right\}, \quad w \in W.
\end{equation}

Some basic properties of $f_2$ and $G_2$ are shown in the following two lemmas.

\begin{lemma}\label{lemma_f2}
	The function $f_2: W \times \mathbb R^n \to \mathbb R^n \times \mathbb R^m$ defined by~\eqref{f2} is strictly differentiable at any $(\bar w, \bar x) \in W \times \mathbb R^n$ with $\bar w = (\bar A, \bar b)$. The derivative $\nabla f_2(\bar w, \bar x): W \times \mathbb R^n \to \mathbb R^n \times \mathbb R^m$ of $f_2$ at $(\bar w, \bar x)$ is given by
	\begin{equation}\label{deri_f2}
		\nabla f_2(\bar w, \bar x)(w) = (-x, - \bar A x  - A \bar x -b), \quad w= (A, b) \in W, \ x \in \mathbb R^n.
	\end{equation}
	Moreover, if $\bar x \neq 0$, then  the operator $\nabla f_2(\bar w, \bar x) : W \times \mathbb R^n \to \mathbb R^n \times \mathbb R^m$ is surjective and thus its adjoint operator  $\nabla f_2(\bar w, \bar x)^* : \mathbb R^n \times \mathbb R^m \to W \times \mathbb R^n$ is injective.
\end{lemma}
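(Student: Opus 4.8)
The plan is to mirror exactly the structure of the proof of Lemma~\ref{lemma_f1}, since $f_2$ has the same affine form as $f_1$ but with a single decision variable $x$ and a single matrix-vector parameter $w=(A,b)$. First I would establish strict differentiability by exhibiting the candidate linear operator $T_2(\bar w, \bar x)(w,x) := (-x, -\bar A x - A\bar x - b)$, then show that the Fr\'echet difference quotient vanishes in the limit. The only nonlinear term in $f_2$ is the bilinear product $Ax$, whose increment contributes $(A-\bar A)(x-\bar x)$; dividing by $\|(w-\bar w, x-\bar x)\|$ and bounding by $\|A-\bar A\|$ (which tends to $0$) shows this term is $o(\|(w-\bar w,x-\bar x)\|)$, exactly as in Lemma~\ref{lemma_f1}. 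Strict (as opposed to merely Fr\'echet) differentiability then follows from the continuity of the map $(\bar w,\bar x)\mapsto T_2(\bar w,\bar x)$ into $L(W\times\mathbb R^n,\mathbb R^n\times\mathbb R^m)$, invoking~\cite[p.~19]{B-M06}.

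Next I would verify surjectivity of $\nabla f_2(\bar w,\bar x)$ under the hypothesis $\bar x\neq 0$. Given an arbitrary target $v=(v^1,v^2)\in\mathbb R^n\times\mathbb R^m$, the first component of~\eqref{deri_f2} forces $x=-v^1$, and then the second component requires choosing $(A,b)$ so that $A\bar x - b = v^2 + \bar A v^1$; write $\tilde v := v^2 + \bar A v^1$. The plan is to set $b=0$ and solve $A\bar x = \tilde v$ for the matrix $A$. Since $\bar x\neq 0$, some coordinate $\bar x_{\tilde j}\neq 0$, and I would place all the weight on the $\tilde j$-th column by setting $a_{i\tilde j} = \tilde v_i / \bar x_{\tilde j}$ and $a_{ij}=0$ for $j\neq\tilde j$; this yields $A\bar x = \tilde v$ column by column, exactly the construction used in Lemma~\ref{lemma_f1} (indeed simpler, since there is no $B\bar y$ term to suppress). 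The injectivity of the adjoint $\nabla f_2(\bar w,\bar x)^*$ is then immediate from surjectivity of the operator together with~\cite[Lemma~1.18]{B-M06}.

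I do not anticipate a genuine obstacle here, as this lemma is a direct specialization of Lemma~\ref{lemma_f1}; the split-feasibility setting replaces the pair $(x,y)$ by the single variable $x$ and the pair of matrices $(A,B)$ by $(A,b)$, so every estimate carries over verbatim. The one point requiring mild care is the role of the hypothesis $\bar x\neq 0$: it is precisely what makes the column-placement construction possible, and without it (when $\bar x=0$) the second component of $\nabla f_2(\bar w,\bar x)(w)$ reduces to $-\bar A x - b$, which cannot reach an arbitrary $v^2$ unless the range of $\bar A$ plus the shift by $b$ covers $\mathbb R^m$—so surjectivity genuinely fails in general when $\bar x=0$, confirming that the hypothesis is not an artifact of the proof.
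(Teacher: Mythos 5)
Your proof of the lemma's stated assertions is correct and follows essentially the same route as the paper: the same candidate operator $T_2(\bar w,\bar x)(w,x)=(-x,-\bar A x - A\bar x - b)$, the same estimate showing the bilinear remainder $(A-\bar A)(x-\bar x)$ is $o(\|(w-\bar w,x-\bar x)\|)$, the same continuity-of-$T_2$ argument for strict (not just Fr\'echet) differentiability via \cite[p.~19]{B-M06}, the same column-placement construction with $b=0$ for surjectivity, and the same appeal to \cite[Lemma~1.18]{B-M06} for injectivity of the adjoint. One small slip: the equation forced by the second component of~\eqref{deri_f2} is $A\bar x + b = \bar A v^1 - v^2$, not $A\bar x - b = v^2 + \bar A v^1$; this is harmless, since $b$ is free and your construction solves $A\bar x=\tilde v$ for an arbitrary right-hand side $\tilde v$.

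However, your closing remark --- that surjectivity ``genuinely fails in general when $\bar x=0$'' --- is false, and the error is instructive. You are treating $b$ as a fixed datum, but $b$ is a coordinate of the variable $w=(A,b)$ on which the derivative acts: $\nabla f_2(\bar w,\bar x)$ maps $(A,b,x)$ to $(-x,\,-\bar A x - A\bar x - b)$ with $b$ ranging over all of $\mathbb R^m$. Consequently, even when $\bar x=0$, any target $(v^1,v^2)$ is attained by taking $x=-v^1$, $A=0$, and $b=\bar A v^1 - v^2$. So $\nabla f_2(\bar w,\bar x)$ is surjective for \emph{every} $\bar x$; the hypothesis $\bar x\neq 0$ in the lemma is sufficient but not necessary, and it \emph{is} an artifact of the particular column construction, which is inherited from the homogeneous setting of~\cite{HuongXuYen22} (there no free $b$ is available, and at $\bar x=0$ the range of the derivative really does collapse to $\mathbb R^n\times{\rm range}\,\bar A$, so the hypothesis is genuinely needed in that paper --- but not here, precisely because of the canonical perturbation $b$ that distinguishes the nonhomogeneous problem). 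This does not affect the validity of your proof of the lemma as stated, but the remark should be deleted or corrected.
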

\begin{proof}
	Fix any element  $(\bar w, \bar x) \in W \times \mathbb R^n$ with $\bar w = (\bar A, \bar b)$ and consider the linear operator $T_2(\bar w, \bar x): W \times \mathbb R^n \to \mathbb R^n \times \mathbb R^m$, 
	\begin{equation*}\label{T2}
		T_2(\bar w, \bar x) (w, x):= (-x, - \bar A x  - A \bar x- b), \quad w =(A, b) \in W,\  x \in\mathbb R^n.
	\end{equation*}
	Note that
	\begin{align*}\begin{array}{rcl}
			L_2:=	& &\displaystyle\lim_{(w, x)\rightarrow (\bar w,  \bar x)}\dfrac{f_2(w, x)-f_2(\bar w,  \bar x)- T_2(\bar w,  \bar x)((w, x)-(\bar w,  \bar x))}{\|(w, x)-(\bar w,  \bar x)\|}\nonumber\\
			=	& & \displaystyle\lim_{(w, x)\rightarrow (\bar w,  \bar x)}\Big[
			\dfrac{(-x, -Ax-b)- (-\bar x, -\bar A \bar x - \bar b)}{\|A-\bar A\|+\|B-\bar B\|+\| x-\bar x\|}  \\ & & \qquad  \qquad \; \; \; - \dfrac{\big(-(x -\bar x), - \bar A (x-\bar x) - (A -\bar A)\bar x - (b-\bar b)\big)}{\|A-\bar A\|+\|B-\bar B\|+\| x-\bar x\|} \Big]\nonumber\\
			=	& & \displaystyle\lim_{(w, x)\rightarrow (\bar w,  \bar x)}\bigg (0,  \dfrac{-(A - \bar A)(x-\bar x)}{\|A-\bar A\|+\|B-\bar B\|+\|x-\bar x\|} \bigg).
		\end{array}
	\end{align*}
	It follows that $L_2 = 0$. Hence, $f_2$ is Fr\'echet differentiable  at $(\bar w, \bar x)$ and $\nabla f_2(\bar w, \bar x)=T_2(\bar w, \bar x)$. Since the map $\mathcal T_2:W \times \mathbb R^n   \to L(W \times \mathbb R^n , \mathbb R^n \times \mathbb R^m)$ putting each $(\bar w, \bar x)\in W\times \mathbb R^n$ in correspondence with the linear operator $T_2(\bar w, \bar x)\in L(W \times \mathbb R^n , \mathbb R^n \times \mathbb R^m)$ is continuous on $W \times \mathbb R^n$, we can infer that $f_2$ is strictly differentiable  at $(\bar w, \bar x)$ and its strict derivative is given by~\eqref{deri_f2}; see, e.g., \cite[p.~19]{B-M06}.  
	
	To justify the second assertion of the theorem, suppose that $\bar x = (\bar x_1, \bar x_2, \dots, \bar x_n)$ with $\bar x_{\tilde j} \neq 0$ for some $\tilde j \in \{1, 2, \dots, n\}$. The surjectivity of $\nabla f_2(\bar w, \bar x): W \times \mathbb R^n \to \mathbb R^n \times \mathbb R^m$ means that for any $(u, v) \in \mathbb R^n \times \mathbb R^m$ there exist $w= (A, b) \in W$ and  $x \in \mathbb R^n$ satisfying 
	\begin{equation}\label{eq}
		\nabla f_2(\bar w, \bar x)(w, x) = (u, v).
	\end{equation}
	Using~\eqref{deri_f2} we have $\nabla f_2(\bar w, \bar x)(w, x) = (u, v)$ if and only if $(-x, - \bar A x  - A \bar x -b) = (u, v)$. Thus, the relation~\eqref{eq} holds when $x = -u$ and the components $A, b$ of $w$ satisfy
	\begin{align}\label{identitypsi_5}
		A \bar x + b= \bar A u - v.
	\end{align}
	Set $\widetilde v =  \bar A u - v$ and suppose that $ \widetilde v=(\widetilde v_1, \widetilde v_2, \dots, \widetilde v_m) \in \mathbb R^m$. Then, \eqref{identitypsi_5} is equivalent to 
	\begin{equation}\label{eq2}
		A \bar x + b = \widetilde v .
	\end{equation}
	Define $b=0_{\mathbb R^m}$, $A = (a_{ij}) \in \mathbb{R}^{m\times n}$ with $a_{i\tilde j} = \dfrac{\widetilde v_i}{\bar x_{\tilde j}}$ for $i \in \{1, \dots,  m\}$ and $a_{ij} = 0$ for $i \in \{1, \dots, m\}, j \in \{1, \dots, n\} \setminus \{\tilde j\}$. Then, we have
	\begin{align*}
		A \bar x + b = A \bar x = \left (\begin{array}{c} a_{11}\bar x_1 
			+ a_{12} \bar x_2 + \cdots + a_{1n} \bar x_n \\
			a_{21}\bar x_1 + a_{22} \bar x_2 + \cdots + a_{2n} \bar x_n\\
			\vdots\\
			a_{m1}\bar x_1 + a_{m2} \bar x_2 + \cdots + a_{mn} \bar x_n
		\end{array}\right) = \left(\begin{array}{c}\widetilde v_1\\
			\widetilde v_2\\
			\vdots\\
			\widetilde v_m \end{array}\right).
	\end{align*}
	Thus, the chosen pair $(A, b)$ satisfies~\eqref{eq2}. We have proved that $\nabla f_2(\bar w, \bar x)$ is surjective. The injectivity of the adjoint operator $\nabla f_2(\bar w, \bar x)^*$ follows from~\cite[Lemma 1.18]{B-M06}. 
	
	The proof is complete.
\end{proof}

\begin{lemma}\label{lemma_G2}
	The set-valued map $G_2: W \times \mathbb R^n \rightrightarrows \mathbb R^n \times \mathbb R^m$ defined by~\eqref{G2} has closed graph. For any given point $\big((\bar w, \bar x), (\bar u, \bar v)\big) \in \gph G_2$ with $\bar w= (\bar A, \bar b)$, the limiting coderivative $D^*G_2\big((\bar w, \bar x), (\bar u, \bar v)\big): \mathbb R^n \times \mathbb R^m \rightrightarrows W \times \mathbb R^n$ is given by
	\begin{equation}\label{coderi_G2}
		D^*G_2\big((\bar w, \bar x), (\bar u, \bar v)\big)(u', v')=
		\begin{cases}
			\{(0, 0)\}, \quad &\mbox{if } u' \in -N(\bar u; \mathcal C), \ v' \in -N(\bar v; \mathcal Q)\\
			\emptyset, & \mbox{otherwise},
		\end{cases}
	\end{equation}
	where $(u', v')$ is an arbitrary point in $\mathbb R^n \times \mathbb R^m$.
\end{lemma}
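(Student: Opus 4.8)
The plan is to mirror the proof of Lemma~\ref{lemma_G1}, since $G_2$ shares the decisive feature of being \emph{constant} on its domain: its value $\mathcal{C} \times \mathcal{Q}$ does not depend on the argument $(w, x)$. First I would record the explicit description of the graph that follows immediately from~\eqref{G2}, namely
\begin{equation*}
\gph G_2 = (W \times \mathbb{R}^n) \times (\mathcal{C} \times \mathcal{Q}).
\end{equation*}
Because $\mathcal{C}$ and $\mathcal{Q}$ are closed by hypothesis, the product $\mathcal{C} \times \mathcal{Q}$ is closed, and hence so is the whole set above; this settles the first assertion that $G_2$ has closed graph.

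For the coderivative formula I would start from the definition and exploit the product structure of the graph. Fix $\big((\bar w, \bar x), (\bar u, \bar v)\big) \in \gph G_2$ and an arbitrary $(u', v') \in \mathbb{R}^n \times \mathbb{R}^m$. Since the factor $W \times \mathbb{R}^n$ of the graph is the whole space, its limiting normal cone at $(\bar w, \bar x)$ is $\{0\}$; combining this with the standard fact~\cite{B-M06} that the limiting normal cone to a Cartesian product equals the Cartesian product of the factor cones, I obtain
\begin{equation*}
N\big(((\bar w, \bar x), (\bar u, \bar v)); \gph G_2\big) = \{0_{W \times \mathbb{R}^n}\} \times \big(N(\bar u; \mathcal{C}) \times N(\bar v; \mathcal{Q})\big).
\end{equation*}
Substituting this into the definition of the coderivative, $D^*G_2\big((\bar w, \bar x), (\bar u, \bar v)\big)(u', v') = \{(w', x') : ((w', x'), -(u', v')) \in N(\,\cdot\,; \gph G_2)\}$, the first coordinate forces $(w', x') = (0, 0)$, while the membership of the second coordinate reads $-u' \in N(\bar u; \mathcal{C})$ and $-v' \in N(\bar v; \mathcal{Q})$, equivalently $u' \in -N(\bar u; \mathcal{C})$ and $v' \in -N(\bar v; \mathcal{Q})$. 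When these two conditions hold the coderivative value is $\{(0,0)\}$, and otherwise it is empty, which is exactly the dichotomy~\eqref{coderi_G2}.

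I do not anticipate a genuine obstacle here; the argument is a direct transcription of the reasoning behind Lemma~\ref{lemma_G1}. The only point demanding care is the bookkeeping of the two Cartesian factors of $\gph G_2$ — remembering that it is the \emph{image} space $\mathbb{R}^n \times \mathbb{R}^m$ (carrying the genuine constraint $\mathcal{C} \times \mathcal{Q}$) rather than the parameter-decision space $W \times \mathbb{R}^n$ (whose normal cone vanishes) that produces the nontrivial normal-cone conditions on $u'$ and $v'$.
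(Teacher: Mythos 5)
Your proof is correct and follows exactly the route the paper intends: the paper omits the proof of this lemma, stating it is similar to that of Lemma~\ref{lemma_G1}, and your argument is precisely that transcription — writing $\gph G_2 = (W\times\mathbb{R}^n)\times(\mathcal{C}\times\mathcal{Q})$, noting closedness from the closedness of $\mathcal{C}$ and $\mathcal{Q}$, and computing the limiting normal cone via the product rule so that the parameter-decision factor contributes $\{0\}$ and the image factor yields the conditions $u'\in -N(\bar u;\mathcal{C})$, $v'\in -N(\bar v;\mathcal{Q})$. No gaps; the bookkeeping of the two Cartesian factors, which you flag as the only delicate point, is handled correctly.
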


The proof of this lemma is omitted because it is similar to that of  Lemma~\ref{lemma_G1}.

\medskip
\textbf{Proof of Theorem~\ref{Lipschitz-like_psi_thm}.}\ \, 
Fix any $(\bar A, \bar b) \in \mathbb{R}^{m\times n} \times \mathbb R^m$ and take $\bar x \in \Psi(\bar A, \bar b)$. Put  $\bar w = (\bar A, \bar b)$ and $(\bar u, \bar v) = -f_2(\bar w, \bar x) = (\bar x, \bar A \bar x+\bar b).$   The representation~\eqref{psi_solution map} allows us to apply Theorem~\ref{Thm. 4.2-Mor-2004-JOGO} to study the Lipschitz-likeness of the solution map $\Psi $. By Lemmas~\ref{lemma_f1} and~\ref{lemma_G1}, $f_2$ is strictly differentiable at $(\bar w, \bar x)$ and $G_1$ has closed graph. Besides, as  $$\gph G_2= W \times \mathbb R^n \times \mathcal C \times \mathcal Q,$$ where $\mathcal C$ is normally regular at $\bar x$ and $\mathcal Q$ is normally regular at $\bar A \bar x + \bar b$ by the assumptions of the theorem, the set $\gph G_2$ is normally regular at the point $\big((\bar w, \bar x), (\bar u, \bar v)\big)$ belonging to it. Thus, $G_2$ is graphically regular at $\big((\bar w, \bar x), (\bar u, \bar v)\big)$.  

To go furthermore, we need to explore condition~\eqref{Lipschitz-like condition}. In our setting, the latter means 
\begin{equation*}
	\big[(w', 0) \in \nabla f_2(\bar w, \bar x)^*(u', v') + D^*G_2\big((\bar w, \bar x), (\bar u, \bar v)\big) (u', v') \big]\Longrightarrow [w' = 0, \, u'= 0, \, v'=0].
\end{equation*}
By~\eqref{coderi_G2}, this implication means that if $w' = (A', b') \in \mathbb R^{m \times n} \times \mathbb R^m $ and $(u', v') \in \mathbb R^n \times \mathbb R^m$ are such that 
\begin{equation}\label{inclusionpsi_1}
	u' \in - N(\bar x; \mathcal C),\ \, v' \in - N(\bar A \bar x+ \bar b; \mathcal Q)
\end{equation}
and 
\begin{equation}\label{inclusionpsi_2}
	(w', 0) = \nabla f_2(\bar w, \bar x)^*(u', v'),
\end{equation}
then one must have $w' = (A', b') = (0, 0)$ and $(u', v') = (0, 0)$. Condition~\eqref{inclusionpsi_2} can be restated equivalently as 
\begin{equation*}
	\langle w', w \rangle= \langle \nabla f_2(\bar A, \bar x)^*(u', v'), (w, x) \rangle, \quad \forall w= (A, b) \in W, \; \forall x \in \mathbb R^n,
\end{equation*}
which means that
\begin{equation*}
	\langle w', w \rangle  = \langle (u', v'), \nabla f_2(\bar A, \bar x) (w, x) \rangle, \quad  \forall w= (A, b) \in W, \; \forall x \in \mathbb R^n.
\end{equation*}
Therefore, by~\eqref{deri_f2} we can rewrite~\eqref{inclusionpsi_2} as 
\begin{equation}\label{inclusionpsi_2a}
	\langle w', w \rangle  = - \langle  u', x \rangle -  \langle v',  \bar A x + A \bar x + b\rangle, \quad  \forall w= (A, b) \in W, \; \forall x \in \mathbb R^n.
\end{equation}
Summing up, we can equivalently restate condition~\eqref{Lipschitz-like condition} as
\begin{equation}\label{Lipschitz-like conditionpsi}
	\left[w'=(A', b')\ {\rm and}\  (u', v')\  {\rm satisfy}\  \eqref{inclusionpsi_1}\ {\rm and}\  \eqref{inclusionpsi_2a}\right] \Longrightarrow \left[w' = 0, \, u' = 0, \, v'=0\right].
\end{equation} 

Now, we can prove the two assertions of the theorem.

(\textit{Sufficiency}) Suppose that condition~\eqref{Lipschitz-like_psi_con} is fulfilled. If we can show that~\eqref{Lipschitz-like conditionpsi} holds, then~$\Psi $ is Lipschitz-like at $(\bar w, \bar x)$ by the first assertion of Theorem~\ref{Thm. 4.2-Mor-2004-JOGO}. Fix any $w'=(A', b')$ and $(u', v')$ satisfying~\eqref{inclusionpsi_1} and~\eqref{inclusionpsi_2a}. Applying~\eqref{inclusionpsi_2a} with $w=(A, b) = (0, 0) \in W$, and $x \in \mathbb R^n$, we get
\begin{equation*}
	\langle  u', x \rangle = -\langle v',  \bar A x \rangle, \quad \forall x \in \mathbb R^n.
\end{equation*}
This implies that $u' = \bar A^{\rm T}(- v')$. Therefore, by the first inclusion in~\eqref{inclusionpsi_1}, we obtain $-v' \in \big(\bar A^{\rm T}\big)^{-1}(-N(\bar x; \mathcal C))$. Combining this with the second inclusion in~\eqref{inclusionpsi_1} yields $$-v' \in \big(\bar A^{\rm T}\big)^{-1}(-N(\bar x; \mathcal C)) \cap N(\bar A \bar x+\bar b; \mathcal Q).$$ So, by the assumption~\eqref{Lipschitz-like_psi_con} we have $-v' = 0$. Hence, $v' = 0$ and $u' = \bar A^{\rm T} (-v') = 0$. Now, substituting $(u', v')=(0, 0)$ into~\eqref{inclusionpsi_2a} yields $w' = (A', b') =(0, 0)$. We have shown that~\eqref{Lipschitz-like conditionpsi} holds and, thus, justified the first assertion of the theorem.

(\textit{Necessity}) Suppose that $\Psi $ is Lipschitz-like at $(\bar w, \bar x)$ with $\bar w = (\bar A, \bar b)$ and $\bar x \neq 0$. The second assertion of Theorem~\ref{Thm. 4.2-Mor-2004-JOGO} can be used, if condition~\eqref{regularity condition} is fulfilled.  In our setting, the latter means that
\begin{equation}\label{regularitypsi}
	\big[(0, 0) \in \nabla f_2(\bar w, \bar x)^*(u', v') + D^*G_2\big((\bar w, \bar x), (\bar u, \bar v)\big) (u', v') \big]\Longrightarrow [(u', v') = (0, 0)].
\end{equation}
Let $(u', v') \in \mathbb R^n \times \mathbb R^m$ be such that
\begin{equation}\label{inclusionpsi}
	(0, 0) \in \nabla f_2(\bar w, \bar x)^*(u', v') + D^*G_2\big((\bar w, \bar x), (\bar u, \bar v)\big) (u', v').
\end{equation}
From~\eqref{coderi_G2} and~\eqref{inclusionpsi} it follows that $u' \in - N(\bar x; \mathcal C)$, $v' \in -N(\bar A \bar x+\bar b; \mathcal Q)$, and
\begin{equation}\label{casespsi}
	\nabla f_2(\bar w, \bar x)^*(u', v') = (0, 0).
\end{equation}
Since $\bar x \neq 0$, Lemma~\ref{lemma_f2} assures that $\nabla f_2(\bar w, \bar x)^* : \mathbb R^n \times \mathbb R^m \to W \times \mathbb R^n$ is injective. Thus,~\eqref{casespsi} implies that $(u', v') = (0, 0)$. So,~\eqref{regularitypsi} holds. Therefore, recalling that $\Psi $ is Lipschitz-like at $(\bar w, \bar x)$, by Theorem~\ref{Thm. 4.2-Mor-2004-JOGO} we get~\eqref{Lipschitz-like condition}. In other words, we have~\eqref{Lipschitz-like conditionpsi}. We will derive~\eqref{Lipschitz-like_psi_con} from~\eqref{Lipschitz-like conditionpsi}.

Take any $v'\in \big(\bar A^{\rm T}\big)^{-1}\big(-N(\bar x; \mathcal C)\big) \cap N\big(\bar A \bar x+\bar b; \mathcal Q\big)$. Then, $-v' \in -N\big(\bar A \bar x+\bar b; \mathcal Q\big)$ and there is $u' \in -N(\bar x; \mathcal C)$ such that $\bar A^{\rm T}(-v') = -u'$. The last equality implies that
\begin{equation}\label{inclusionpsi_3}
	0 = - \langle  u', x \rangle - \langle -v',  \bar A x \rangle, \quad \forall x \in \mathbb R^n.
\end{equation}
There exists $w' = (A', b') \in W$ such that~\eqref{inclusionpsi_2a}, where $-v'$ takes the place of $v'$, is satisfied. Indeed, suppose that $\bar x = (\bar x_1, \bar x_2, \dots, \bar x_n)$ and $v' = (v'_1, v'_2, \dots, v'_m)$. Choose $b'= v' \in \mathbb R^m$ and 
\begin{equation*}\label{A'_psi}
	A'=\left (\begin{array}{c c c c} 
		v'_1\bar x_1 & v'_1 \bar x_2 &\cdots &v'_1\bar x_n\\
		v'_2\bar x_1 & v'_2 \bar x_2 &\cdots &v'_2\bar x_n\\
		\vdots & \vdots &\cdots &\vdots\\
		v'_m\bar x_1 & v'_m \bar x_2 &\cdots &v'_m\bar x_n
	\end{array}\right)  \in \mathbb{R}^{m\times n}.
\end{equation*}
For any $w = (A, b) \in W$ with 
\begin{equation*}
	A=\left (\begin{array}{c c c c} 
		a_{11} & a_{12} &\cdots &a_{1n}\\
		a_{21} & a_{22} &\cdots &a_{2n}\\
		\vdots & \vdots &\cdots &\vdots\\
		a_{m1}& a_{m2} &\cdots &a_{mn}
	\end{array}\right),
\end{equation*}
we have $\langle b', b \rangle = \langle v', b \rangle$ and $$\langle A', A\rangle = \displaystyle \sum_{i=1}^m\sum_{j=1}^{n} (v'_i a_{ij} \bar x_j)= \displaystyle \sum_{i=1}^m v'_i \left(\displaystyle \sum_{j=1}^n a_{ij} \bar x_j\right) = \langle v', A \bar x \rangle.$$ Thus, $\langle w', w \rangle =\langle v',  A \bar x + b\rangle$ for all $w = (A, b) \in W$.
Combining this with~\eqref{inclusionpsi_3}, we get
\begin{equation*}
	\langle w', w \rangle =  - \langle  u', x \rangle -  \langle - v',  \bar A x + A \bar x + b\rangle
\end{equation*}
for any $w = (A, b) \in W$ and $x \in \mathbb R^n$. This means that~\eqref{inclusionpsi_2a}, where $-v'$ takes the place of $v'$, holds. Since $u' \in -N(\bar x; \mathcal C)$ and $-v' \in -N\big(\bar A \bar x; \mathcal Q\big)$, we see that~\eqref{inclusionpsi_1}, where $-v'$ takes the place of $v'$, is valid. Therefore, by~\eqref{Lipschitz-like conditionpsi} we have $w'=0$, $u'=0$, and $-v'=0$. In particular, $v'=0$. Because $v'\in \big(\bar A^{\rm T}\big)^{-1}\big(-N(\bar x; \mathcal C)\big) \cap N\big(\bar A \bar x+\bar b; \mathcal Q\big)$ was taken arbitrarily, we obtain~\eqref{Lipschitz-like_psi_con}.

The proof is complete. $\hfill\Box$

\medskip
Since a convex set is normally regular at any point belonging to it, the next statement immediately follows from Theorem~\ref{Lipschitz-like_psi_thm}.

\begin{theorem}
	Suppose that $\mathcal C$ and $\mathcal Q$ are convex sets. Let $(\bar A, \bar b) \in \mathbb{R}^{m\times n} \times \mathbb R^m$ and $\bar x \in \Psi(\bar A, \bar b)$. If condition~\eqref{Lipschitz-like_psi_con} is satisfied, then the solution map $\Psi $ of {\rm (NSFP)} is Lipschitz-like at $\big((\bar A, \bar b), \bar x\big)$. Conversely, if $\Psi $ is Lipschitz-like at $\big((\bar A, \bar b), \bar x\big)$  and $\bar x \neq 0$, then it is necessary that the condition~\eqref{Lipschitz-like_psi_con} is fulfilled.
\end{theorem}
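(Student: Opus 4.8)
The plan is to derive this statement as an immediate corollary of Theorem~\ref{Lipschitz-like_psi_thm}, whose hypotheses on normal regularity are automatically met in the convex setting. First I would recall the fact established in Section~\ref{sect_Preliminaries} (following~\cite[Prop.~1.5]{B-M06}) that a convex set is normally regular at every point belonging to it. Applying this observation to $\mathcal C$ shows that $\mathcal C$ is normally regular at $\bar x$, and applying it to $\mathcal Q$ shows that $\mathcal Q$ is normally regular at $\bar A\bar x+\bar b$. Note that $\bar A\bar x+\bar b\in\mathcal Q$ precisely because $\bar x\in\Psi(\bar A,\bar b)$, so the second application is legitimate.

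With both normal-regularity requirements verified, every hypothesis of Theorem~\ref{Lipschitz-like_psi_thm} is in force. The sufficiency claim---that~\eqref{Lipschitz-like_psi_con} implies the Lipschitz-likeness of $\Psi$ at $\big((\bar A,\bar b),\bar x\big)$---is then exactly the first assertion of that theorem, and the necessity claim under the extra hypothesis $\bar x\neq 0$ is exactly its second assertion. Thus the whole statement is obtained by specialization, with no new computation required.

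I expect no genuine obstacle here, since the result is a direct corollary. The only subtlety worth flagging is purely notational: for convex $\mathcal C$ and $\mathcal Q$ the limiting normal cone $N(\cdot;\cdot)$ appearing in~\eqref{Lipschitz-like_psi_con} coincides with the normal cone of convex analysis, as recorded in Section~\ref{sect_Preliminaries}. Hence condition~\eqref{Lipschitz-like_psi_con} may be read interchangeably in either sense, and no reconciliation between the two notions is needed to invoke the parent theorem.
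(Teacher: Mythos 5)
Your proposal is correct and matches the paper's own argument: the paper derives this theorem as an immediate corollary of Theorem~\ref{Lipschitz-like_psi_thm}, using precisely the observation that a convex set is normally regular at every point belonging to it. Your extra remarks (checking $\bar A\bar x+\bar b\in\mathcal Q$, and that the limiting normal cone reduces to the convex-analysis normal cone) are accurate and only make the specialization more explicit.
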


\section{Illustrative Examples}\label{sect_Examples} \setcounter{equation}{0}

For all the examples in this section, we choose $n=2,\ m=1,\ l=1$, and $\mathcal Q=\mathbb R_+$. Assumptions and assertions of Theorems~\ref{Lipschitz-like_phi_thm} and~\ref{Lipschitz-like_psi_thm} will be verified in detail.

The next example illustrates the applicability of Theorem~\ref{Lipschitz-like_psi_thm} for nonhomogeneous split feasibility problems with possibly nonconvex constraint sets.

\begin{ex}\label{ex_1} {\rm Let $\mathcal C:=\big\{x=(x_1, x_2)\in \mathbb{R}^2\, :\, x_1\leq x_2^2 \big\}$. With $A=\big(\begin{matrix}
			a_{11} & a_{12}
		\end{matrix}\big)\in\mathbb R^{1\times 2}$ and $b\in\mathbb R$, the solution map in~\eqref{psi} of the nonhomogeneous split feasibility problem becomes\ \, 
		$\Psi (A, b)= \big\{x\in \mathcal C \, : \, a_{11}x_1 +a_{12}x_2+b\geq 0\big\}.$ The analysis given in Example~\ref{ex_a} shows that $\mathcal C$ is nonconvex but normally regular at any point from the set. For $\bar A:=\big(\begin{matrix}
			1 & -2
		\end{matrix}\big)$ and $\bar b:=1$, one has
		$$\begin{array}{rcl}
			\Psi (\bar A,\bar b)& = & \big\{x=(x_1,x_2)\in\mathbb R^2\, : \,  x_1\leq x_2^2,\ x_1 -2x_2+1\geq 0\big\}\\
			& = & \big\{x=(x_1,x_2)\in\mathbb R^2\, : \, 2x_2-1 \leq x_1\leq x_2^2\big\}. \end{array}$$ For $x_2=1$, the condition $2x_2-1 \leq x_1\leq x_2^2$ in the above description of the solution set $\Psi (\bar A,\bar b)$ implies that $x_1=1$. Meanwhile, for every $x_2>1$ (resp., for every $x_2<1$), this condition forces $x_1\in [2x_2-1,x_2^2]\subset\mathbb R$, where $2x_2-1<x_2^2$. Thus, the intersection of $\Psi (\bar A,\bar b)$ with the horizontal straight line $x_2=\beta$ in $\mathbb R^2$ is a line segment having distinct endpoints, provided that $\beta\neq 1$. Note also that $\Psi (\bar A,\bar b)=\Omega_1\cup \Omega_2$, where $\Omega_1:=\{x \, : \, 2x_2-1 \leq x_1\leq x_2^2, x_2\leq 1\}$ and $\Omega_2=\{x \, : \, 2x_2-1 \leq x_1\leq x_2^2,\ x_2\geq 1\}$ (see Fig.~\ref{fig1}). One has $\Omega_1\cap \Omega_2=\{(1,1)\}$ and each set $\Omega_i$ for $i\in\{1,2\}$ is a connected unbounded domain in $\mathbb R^2$. 
		\begin{figure}[ht!]
			\centering
			\includegraphics[width=0.8\linewidth]{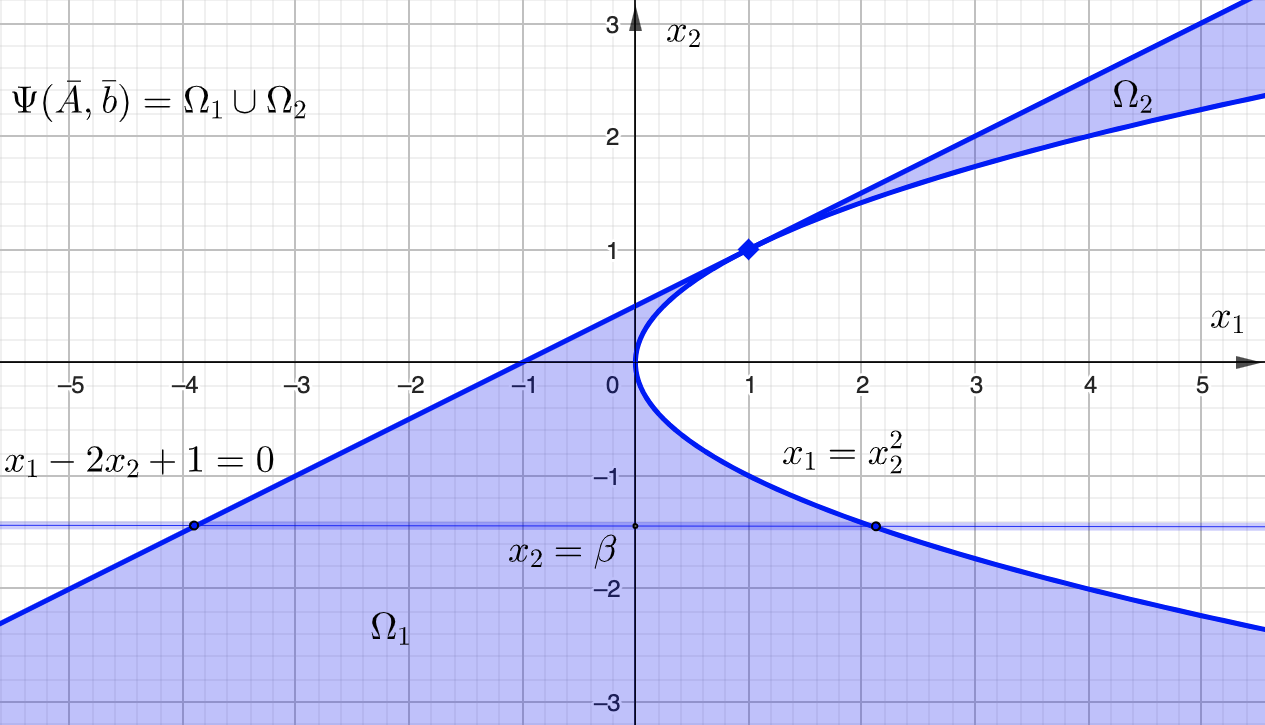}
			\caption{The solution set $\Psi(\bar A, \bar b)$ of (NSFP) in Example~\ref{ex_1}}
			\label{fig1}
		\end{figure}
		
		Take any point $\bar x=(\bar x_1,\bar x_2)\in \Psi (\bar A,\bar b)\setminus\{(1,1)\}$. If $\bar x_1<\bar x_2^2$, then $\bar x\in {\rm int}\,\mathcal C$; so $N(\bar x; \mathcal C)=\{(0,0)\}$. It follows that $$\big(\bar A^{\rm T}\big)^{-1}(-N(\bar x; \mathcal C))=\left\{\lambda\in\mathbb R\,:\, \left(\begin{matrix}
			\lambda\\ -2\lambda
		\end{matrix}\right)=\left(\begin{matrix}
			0\\ 0
		\end{matrix}\right)\right\}=\{0\}.$$ Hence condition~\eqref{Lipschitz-like_psi_con} is satisfied. So, the solution map $\Psi $ of {\rm (NSFP)} is Lipschitz-like at $\big((\bar A, \bar b), \bar x\big)$ by the first assertion of Theorem~\ref{Lipschitz-like_psi_thm}. If $\bar x_1=\bar x_2^2$, then $\bar x$ is a boundary point of $\mathcal C$. Using formula~(1.18) from~\cite{B-M06}, we can verify that $N(\bar x; \mathcal C)=\{(t,-2\bar x_2t)\, :\, t\geq 0\}$. Therefore, one has $$\big(\bar A^{\rm T}\big)^{-1}(-N(\bar x; \mathcal C))=\left\{\lambda\in\mathbb R\,:\, \left(\begin{matrix}
			\lambda\\ -2\lambda
		\end{matrix}\right)=\left(\begin{matrix}
			-t\\ 2\bar x_2t
		\end{matrix}\right) \ \,{\rm for\ some}\ \, t\geq 0\right\}=\{0\}.$$ Hence condition~\eqref{Lipschitz-like_psi_con} is again satisfied, and Theorem~\ref{Lipschitz-like_psi_thm} assures that $\Psi$ is Lipschitz-like at $\big((\bar A, \bar b), \bar x\big)$. Finally, consider the solution $\bar x=(1,1)\in\Psi (\bar A,\bar b)$. Since $\bar A \bar x+\bar b=0$, one has $N(\bar A \bar x+\bar b; \mathcal Q)=-\mathbb R_+$.  Noting that $\bar x=(1,1)$ is a boundary point of of $\mathcal C$, by~\cite[formula~(1.18)]{B-M06} we get $N(\bar x; \mathcal C)=\{(t,-2t)\, :\, t\geq 0\}$. It follows that $$\big(\bar A^{\rm T}\big)^{-1}(-N(\bar x; \mathcal C))=-\mathbb R_+.$$ Thus, condition~\eqref{Lipschitz-like_psi_con} is violated. As  $\bar x\neq 0$, by the second assertion of Theorem~\ref{Lipschitz-like_psi_thm} we can infer that $\Psi$ is \textit{not} Lipschitz-like at $\big((\bar A, \bar b), \bar x\big)$.}
\end{ex}

\begin{ex}\label{ex_2} {\rm Define $\mathcal C=\big\{x=(x_1, x_2)\in \mathbb{R}^2\, :\, 2\leq x_1^2+x_2^2 \leq 5\big\}$ and consider the nonhomogeneous split equality problem defined by the data tube $(A, B, c, \mathcal C, \mathcal Q)$ with $A\in\mathbb R^{1\times 2}$, $B\in\mathbb R^{1\times 1}$ and $c\in\mathbb R$ , which aims at finding all the pairs $(x, y) \in \mathcal C \times \mathcal Q$ such that $Ax - By = c$. As it has been shown in Example~\ref{ex_b}, the nonconvex and compact set $\mathcal C$ is normally regular at any point of it. Since $\mathcal Q=\mathbb R_+$ is a convex set, the normal regularity is available at any point belonging to it.  Let us choose $\bar A=\big(\begin{matrix}
			1 & 1
		\end{matrix}\big)\in\mathbb R^{1\times 2}$, $\bar B=(\frac{1}{2})\in\mathbb R^{1\times 1}$, and $\bar c=1$. 
		\begin{figure}[ht!]
			\begin{minipage}[c][1\width]{
					0.48\textwidth}
				\centering
				\includegraphics[width=1\textwidth]{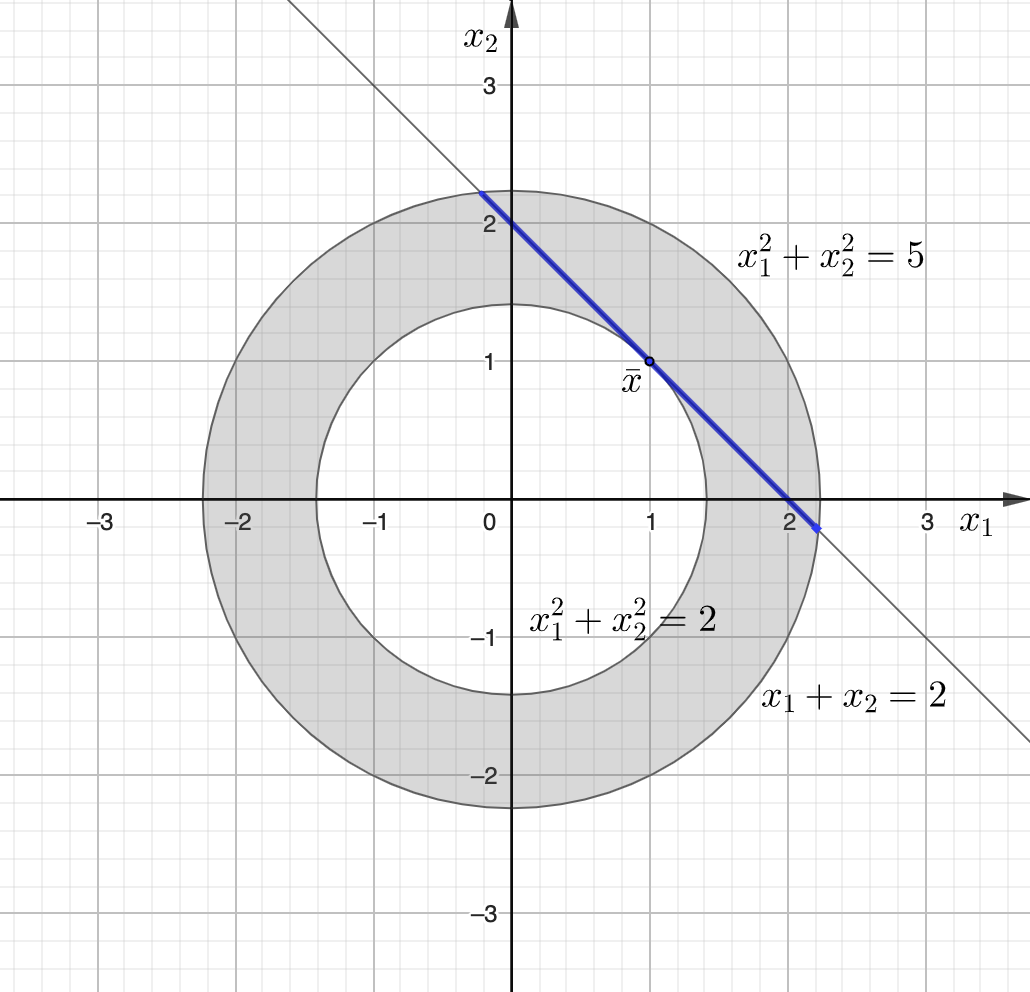}
			\end{minipage}
			\hfill 	
			\begin{minipage}[c][1\width]{
					0.48\textwidth}
				\centering
				\includegraphics[width=1\textwidth]{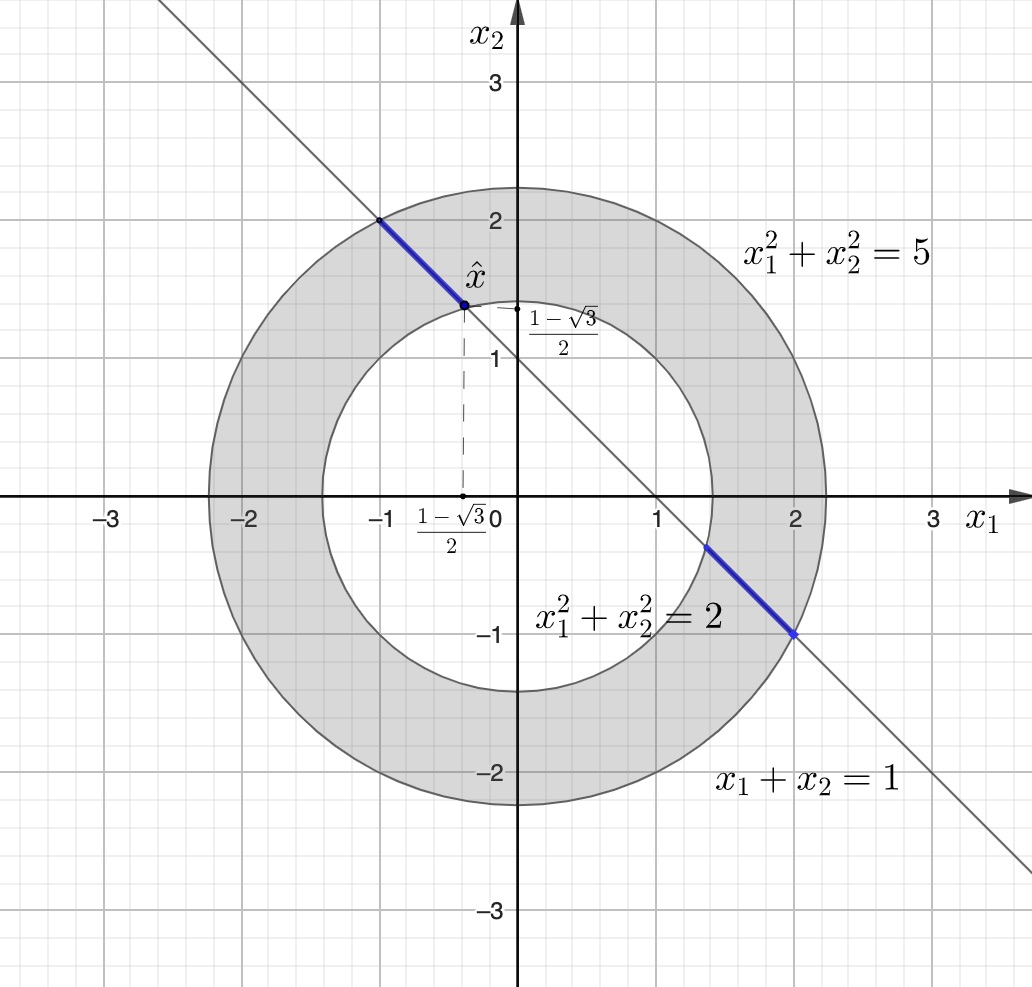}
			\end{minipage}
			\caption{Projections of the solution set $\Phi(\bar A, \bar B, \bar c)$ of (NSEP) in Example~\ref{ex_2} on the plane $y = 2$ (left) and on the plane $y = 0$ (right)}
		\end{figure}
		First, take $\bar x=(1,1)$ and $\bar y=2$. Then 
		$(\bar x, \bar y) \in \Phi (\bar A,\bar B, \bar c)$. The regularity condition
		\begin{equation*}\label{Lipschitz-like_S_con}
			\big(\bar A^{\rm T}\big)^{-1}\big(-N(\bar x; \mathcal C)\big) \cap \big(\bar B^{\rm T}\big)^{-1}\big(N(\bar y; \mathcal Q)\big)  = \{0\}
		\end{equation*} is satisfied, because $N(\bar y; \mathcal Q)=\{0\}$. So, the solution map $\Phi$ of {\rm (NSEP)} is Lipschitz-like at $\big((\bar A, \bar B, \bar c),(\bar x, \bar y)\big)$ by Theorem~\ref{Lipschitz-like_phi_thm}.
		
		Next, define $\hat x=\left(\dfrac{1-\sqrt{3}}{2},\dfrac{1+\sqrt{3}}{2}\right)$ and $\hat y=0$. Then 
		$(\hat x, \hat y) \in \Phi (\bar A,\bar B, \bar c)$. The regularity condition
		\begin{equation*}\label{Lipschitz-like_S_con}
			\big(\bar A^{\rm T}\big)^{-1}\big(-N(\hat x; \mathcal C)\big) \cap \big(\bar B^{\rm T}\big)^{-1}\big(N(\hat y; \mathcal Q)\big)  = \{0\}
		\end{equation*} is fulfilled. Indeed, here we have $N(\bar y; \mathcal Q)=-\mathbb R_+$, but $\big(\bar A^{\rm T}\big)^{-1}\big(-N(\hat x; \mathcal C)\big)=\{0\}$. Thus, the solution map $\Phi$ of {\rm (NSEP)} is Lipschitz-like at $\big((\bar A, \bar B, \bar c),(\hat x, \hat y)\big)$ by Theorem~\ref{Lipschitz-like_phi_thm}.}
\end{ex}

\section{Conclusions}\label{sect_Conclusions} \setcounter{equation}{0}

By introducing a right-hand-side perturbation to the constraint system of the classical split equality problem and the split feasibility problem, we get a \textit{nonhomogeneous split equality problem} (resp.,~\textit{nonhomogeneous split feasibility problem}). These problems have been transformed into suitable parametric generalized equations and used some tools of generalized differentiation and a fundamental result of Mordukhovich~\cite{Mor04}. This approach allows us to characterize the Lipschitz-like property of the solution maps in question. Because of the appearance of the canonical perturbation of the problem, the results of this paper are independent of that   in~\cite{HuongXuYen22}, even when the constraint sets are assumed to be convex.

Two examples have been designed to analyze the obtained necessary and sufficient conditions for the Lipschitz-likeness of the solution maps. 

\section*{Statements and Declarations}
\textbf{Acknowledgements}:  This work has been co-funded by the European Union (European Regional Development Fund EFRE, fund number STIIV-001), the National Natural Science Foundation of China (grant number U1811461), the Australian Research Council/Discovery Project (grant number DP200100124), and the Vietnam Academy of Science and Technology (project code NCXS02.01/24-25). 

\medskip
\noindent\textbf{Author Contributions}: All authors contributed equally, read and approved the final manuscript.

\medskip
\noindent\textbf{Competing Interests}: The authors have no relevant financial or non-financial interests to disclose.

\end{document}